\documentclass[a4paper,12pt]{article}

\usepackage{amsfonts,amssymb,amsmath,amsthm}
\usepackage{bbm}
\usepackage{xcolor}
\usepackage{graphics}
\usepackage{graphicx}
\usepackage{mathtools}
\usepackage{hyperref} 
\usepackage{ulem} 

\numberwithin{equation}{section}

\def\un{\mathbbm{1}}

\def\E{\mathbb{E}}
\def\P{\mathbb{P}}
\def\R{\mathbb{R}}

\def\N{\mathbb{N}}

\def\d{\partial}

\def\t{-}
\def\T{_}
\def\eq{=}
\def\pl{+}
\newtheorem{theorem}{Theorem}[section]
\newtheorem{lemme}[theorem]{Lemma}
\newtheorem{corollary}[theorem]{Corollary}

\newtheorem{proposition}[theorem]{Proposition}

\theoremstyle{remark}
\newtheorem{remark}{Remark}
\newtheorem{example}{Example}

    \def\restriction#1#2{\mathchoice
                  {\setbox1\hbox{${\displaystyle #1}_{\scriptstyle #2}$}
                  \restrictionaux{#1}{#2}}
                  {\setbox1\hbox{${\textstyle #1}_{\scriptstyle #2}$}
                  \restrictionaux{#1}{#2}}
                  {\setbox1\hbox{${\scriptstyle #1}_{\scriptscriptstyle #2}$}
                  \restrictionaux{#1}{#2}}
                  {\setbox1\hbox{${\scriptscriptstyle #1}_{\scriptscriptstyle #2}$}
                  \restrictionaux{#1}{#2}}}
    \def\restrictionaux#1#2{{#1\,\smash{\vrule height .8\ht1 depth .85\dp1}}_{\,#2}}


\definecolor{darkgreen}{rgb}{0,0.3,0}







\def\be{\begin{eqnarray}}
\def\ee{\end{eqnarray}}
\def\ben{\begin{eqnarray*}}
\def\een{\end{eqnarray*}}

\def\me{\medskip\noindent}
\def\bi{\bigskip\noindent}
\newcommand{\ba}{\vspace{0mm}\begin{equation*}\begin{aligned}}
\newcommand{\ea}{\vspace{0mm}\end{aligned}\end{equation*}}
\newcommand{\ban}{\vspace{0mm}\begin{equation}\begin{aligned}}
\newcommand{\ean}{\vspace{0mm}\end{aligned}\end{equation}}

\title{\center Perpetual integrals convergence and extinctions in population dynamics}

\author{Camille Coron$^{\textup{(a)}}$,  Sylvie M\'el\'eard$^{\textup{(b)}}$, Denis Villemonais$^{\textup{(c)}}$,\\
{\small $^{\textup{(a)}}$ Laboratoire de Math\'ematiques d'Orsay, Univ. Paris-Sud, CNRS, }\\
{\small  Universit\'e Paris-Saclay, 91405 Orsay, France}\\
{\small $^{\textup{(b)}}$ Centre de Math\'ematiques Appliqu\'ees, Ecole polytechnique, CNRS,}\\
{\small  Universit\'e Paris-Saclay, 91128 Palaiseau Cedex, France}\\
{\small $^{\textup{ (c )}}$ IECL, UMR 7502, CNRS,}\\
{\small Universit\'e de Lorraine, Vand{\oe}uvre-l\`es-Nancy, France}\\
{\small Inria, TOSCA team, Villers-l\`es-Nancy, France}
}

\date{\today}

\begin{document}

\maketitle

\begin{abstract}
In this article we use a criterion for the integrability   of paths of one-dimensional diffusion processes from which we derive new insights on allelic fixation in several  situations. This well known criterion involves a simple necessary and sufficient condition based on  scale function and speed measure. We provide a new simple proof for this result and also obtain explicit bounds for the moments of such integrals. We also extend this criterion to non-homogeneous processes by use of Girsanov's transform. We apply our results to multi-type population dynamics: using the criterion with appropriate time changes, we characterize the behavior of proportions of each type  before population extinction in different situations.  \end{abstract}

Keywords: one-dimensional diffusion processes; path integrability;
diffusion absorption; population dynamics; extinction and allelic fixation.

\section{Introduction}
Our  motivations  in this paper come from population genetics. The first question  concerns  the dynamics of an allelic proportion in a variable size population  going to extinction. We wonder whether the allelic proportion will attain $1$  or $0$ (fixation or loss  of the allele in the population) before population extinction.
The second question concerns the dynamics of the respective proportions of $L$ neutral alleles until fixation of one of them. We are asking about simultaneous allele extinctions  or not. 

\smallskip \noindent In  both cases,   we need to slow down the dynamics before either population extinction or allele fixation by the use of  a time change. That  leads us to study quantities of the form $\int_{0}^{T_{0}} f(Z_{s})ds$ (which are referred to as perpetual integrals~\cite{SalminenYor2005}),  for a nonnegative  diffusion process $Z$ and  $T_{0}$ the hitting time of $0$, or $\int_{0}^{T_{0}\wedge T_{1}} f(X_{s})ds$, for a   diffusion process $X\in [0,1]$ and  $T_{0},T_{1}$ the hitting times of $0$ and $1$. We need to know whether such integrals are finite or not. In Section 2, we state and prove a general criterion involving a necessary and sufficient condition based on the scale function and speed measure  of the nonnegative diffusion process $Z$, which ensures that the integral $\int_{0}^{T_{0}} f(Z_{s})ds$ is finite almost surely or infinite almost surely. This 0-1 law criterion was already known and proved using a combination of the local time formula, the Ray-Knight Theorem and Jeulin's Lemma (see~\cite{EngelbertTittel2002,MijatovicUrusov2012}). We provide a simpler proof which also provides explicit bounds for the moments of perpetual integrals and can be easily extended to more general one dimensional Markov processes. Then, we extend this result to a diffusion taking values in a compact subset and finally to non-homogeneous processes by the use of Girsanov's transform. Applications to standard population  models are given. 

\smallskip \noindent  In Section 3, we apply these  general integrability results  to  several open  allele fixation problems. The use   of tricky time changes dramatically simplifies these questions. 
Subsection 3.1 concerns the dynamics of an allele proportion in a population with variable size.
 We assume that the total  population size goes to $0$ almost surely. We give a necessary and sufficient condition for the coupled  logistic population size dynamics and allelic neutral Wright-Fisher equation (with variable size) to get allelic fixation before extinction almost surely. The condition is satisfied when the population size dynamics is a logistic Feller stochastic differential equation. Nevertheless, we give examples of  population size dynamics for which extinction  occurs before fixation with positive probability,  emphasizing by this way  the necessity of taking into account the behavior of the population size, in particular near extinction. We also study a case with allelic selection using a Girsanov's transform 
for the coupled system of population size and allelic proportion. 
In Subsection~3.2,  we consider a neutral $L$-type Wright-Fisher diffusion. We  show that one of the alleles is fixed almost surely in finite time and that until that time, the population experiences successive (and not simultaneous)  allele extinctions. These results are proved by induction on $L$ and using a time change based on the fixation time: we slow down  time before fixation to observe the successive allele extinctions. 

\section{Integrability properties for diffusion processes}

\subsection{General diffusion processes on $[0,+\infty)$}
Let us consider a general one-dimensional diffusion process $\,(Z_{t}, t\geq 0)\,$ (that is a continuous strong Markov process) with values in $[0,+\infty)$. 
We denote by $T_{z}$ the hitting time of $z\in [0,+\infty)$  by the process $Z$:
$$T_{z} = \inf\{t\geq 0, Z_{t} = z\},$$ if the set is non empty and $T_{z} $ is infinite otherwise. 
When the process $Z$ has to be specified, this time will be denoted $T_{z}^Z$.

\me Let us denote by  $\mathbb{P}_z$ the law of $Z$ starting from $z$. 
We assume that $Z$ is regular ($\forall z \in (0,+\infty), \forall y \in (0,+\infty)$, $\mathbb{P}_{z}(T_{y}<+\infty)>0$). This implies (see 
Revuz-Yor \cite[VII-Proposition~3.2]{Revuz}) that for any $a<b\in (0,+\infty)$ and $a\leq z \leq b$,
$$\mathbb{E}_{z}(T_{a}\wedge T_{b})<+\infty.$$

 \me  We may associate with the process a  scale function $\,s\,$  and   its locally finite speed measure $m$ on $[0,+\infty)$ (see \cite[Chapter VII]{Revuz}). We will assume moreover that,  for all $z\in(0,+\infty)$, 
\be
\label{extinction}
\mathbb{P}_z(T_0 = T_{0}\wedge T_{e}<+\infty)=1,\ee
 where $T_{e}$ is the explosion time.

\smallskip
\begin{lemme}
\label{lemextinction}
Condition \eqref{extinction} is equivalent to \be
\label{extinctionbis} \,s(+\infty)=+ \infty\quad ; \quad s(0)>  - \infty \quad  ; \quad \int_{0+} (s(y)-s(0))\,m(dy)<+\infty.\ee
\end{lemme}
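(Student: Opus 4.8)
\medskip
\noindent\textbf{Proof strategy.} The first thing I would do is simplify the left-hand side. Since $0$ is a \emph{finite} point, reaching it in finite time rules out a prior explosion: on $\{T_0<+\infty\}$ the path is continuous, hence bounded, on $[0,T_0]$, so $T_e>T_0$ and thus $T_0=T_0\wedge T_e$; conversely, if the process explodes before hitting $0$ then $T_0=+\infty$. Hence \eqref{extinction} is equivalent to the single statement $\mathbb{P}_z(T_0<+\infty)=1$ for every $z>0$. I would then work with the two classical objects attached to $s$ and $m$: the hitting-probability identity obtained by optional stopping of the bounded martingale $s(Z_{t\wedge T_a\wedge T_b})$, namely $\mathbb{P}_z(T_a<T_b)=\frac{s(b)-s(z)}{s(b)-s(a)}$ for $0<a\le z\le b$ (valid once the exit time of $(a,b)$ is a.s. finite, which holds here by regularity), and the Green-function formula $\mathbb{E}_z(T_a\wedge T_b)=\int_a^b G_{a,b}(z,y)\,m(dy)$ with $G_{a,b}(z,y)=\frac{(s(z\wedge y)-s(a))(s(b)-s(z\vee y))}{s(b)-s(a)}$ (Revuz--Yor, Chapter VII).

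\medskip
\noindent For \textbf{sufficiency}, I would assume \eqref{extinctionbis}, fix $z$ and $b>z$, and let $a\downarrow0$: continuity of paths gives $T_a\wedge T_b\uparrow T_0\wedge T_b$, while $G_{a,b}(z,\cdot)\uparrow G_{0,b}(z,\cdot)$ since $s(a)\downarrow s(0)$, so monotone convergence yields $\mathbb{E}_z(T_0\wedge T_b)=\int_0^b G_{0,b}(z,y)\,m(dy)$. Splitting this at $y=z$, the lower part is bounded by $\frac{s(b)-s(z)}{s(b)-s(0)}\int_0^z(s(y)-s(0))\,m(dy)$, finite by $s(0)>-\infty$ and the third condition of \eqref{extinctionbis}, and the upper part is bounded by $(s(z)-s(0))\,m((z,b])$, finite by local finiteness of $m$. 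Thus $T_0\wedge T_b<+\infty$ a.s., the process exits $(0,b)$ a.s., and the hitting identity gives $\mathbb{P}_z(T_0<T_b)=\frac{s(b)-s(z)}{s(b)-s(0)}\to1$ as $b\to+\infty$ because $s(+\infty)=+\infty$. Since $\{T_0<T_b\}\subset\{T_0<+\infty\}$, this gives $\mathbb{P}_z(T_0<+\infty)=1$.

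\medskip
\noindent For the \textbf{necessity of the two scale conditions} I would argue by contraposition. If $s(0)=-\infty$, then $\mathbb{P}_z(T_0<T_b)\le\mathbb{P}_z(T_a<T_b)=\frac{s(b)-s(z)}{s(b)-s(a)}\to0$ as $a\downarrow0$, so $\mathbb{P}_z(T_0<T_b)=0$ for every $b$; since $\{T_0<+\infty\}\subset\bigcup_{b\in\NN}\{T_0<T_b\}$ (the path is bounded up to $T_0$), this forces $\mathbb{P}_z(T_0<+\infty)=0$. If $s(+\infty)<+\infty$, then for fixed $a\in(0,z)$ we get $\mathbb{P}_z(T_a<T_b)\to\frac{s(+\infty)-s(z)}{s(+\infty)-s(a)}<1$ as $b\to+\infty$, so with positive probability the process explodes upward before reaching $a$, hence before $0$, giving $\mathbb{P}_z(T_0=+\infty)>0$. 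In both cases \eqref{extinction} fails.

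\medskip
\noindent The \textbf{main obstacle} is the necessity of the integral condition: assuming $s(0)>-\infty$ and $s(+\infty)=+\infty$, I must deduce $\int_{0+}(s(y)-s(0))\,m(dy)<+\infty$ from $\mathbb{P}_z(T_0<+\infty)=1$. The difficulty is that divergence of the integral only yields $\mathbb{E}_z(T_0\wedge T_b)=+\infty$ (via the same Green-function computation, since $\int_0^b G_{0,b}(z,y)m(dy)\ge\frac{s(b)-s(z)}{s(b)-s(0)}\int_{0+}(s(y)-s(0))m(dy)$), which is a priori compatible with $T_0<+\infty$ a.s. The plan is to upgrade this to an almost-sure statement through the occupation-time identity $T_0=\int_0^{+\infty}\ell^y_{T_0}\,m(dy)$, where $\ell^y$ is the diffusion local time normalized by the speed measure, so that $T_0<+\infty$ a.s. is precisely $\int_{0+}\ell^y_{T_0}\,m(dy)<+\infty$ a.s.; the point is then that near $0$ one has $\ell^y_{T_0}$ comparable to $s(y)-s(0)$ in an almost-sure integrable sense, which is exactly where the finer description of the local-time field (Ray--Knight, or equivalently Feller's boundary classification as in Revuz--Yor) must be invoked. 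Equivalently, passing to natural scale, I would show directly that divergence of the integral makes $0$ a natural boundary, so that $s(Z)$ is a nonnegative local martingale converging to $s(0)$ but reaching it only in infinite time with positive probability, just as for geometric Brownian motion; this produces $\mathbb{P}_z(T_0=+\infty)>0$ and contradicts \eqref{extinction}.
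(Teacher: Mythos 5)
Your strategy is correct and, at bottom, parallel to the paper's: both arguments reduce matters to the natural-scale process $s(Z)$, and both make the crucial implication --- that $\P_z(T_0<+\infty)=1$ forces $\int_{0+}(s(y)-s(0))\,m(dy)<+\infty$ --- rest on the classical accessibility criterion for a boundary point of a natural-scale diffusion. The paper handles that step by a bare citation of Rogers--Williams, Theorem~51.2; you correctly single it out as the ``main obstacle'' and propose to import the same result (your second route, via the geometric Brownian motion picture, \emph{is} that citation --- note that GBM has $m(dy)=2y^{-2}dy$, so $\int_{0+}y\,m(dy)=+\infty$, and that the criterion actually yields the stronger conclusion $\P_z(T_0=+\infty)=1$ when the integral diverges, not merely positive probability; your first route through the occupation-time formula and Ray--Knight would also work but is exactly the heavy machinery the paper is trying to avoid elsewhere). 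Where you genuinely differ is on the easier implications: you prove sufficiency of \eqref{extinctionbis} by hand via the Green-function formula for $\E_z(T_a\wedge T_b)$, monotone convergence as $a\downarrow 0$, and the exit probabilities $\P_z(T_a<T_b)=(s(b)-s(z))/(s(b)-s(a))$, and you get the necessity of $s(0)>-\infty$ and $s(+\infty)=+\infty$ by contraposition on those same exit probabilities. All of this is correct --- the monotonicity of $G_{a,b}(z,\cdot)$ as $a\downarrow 0$ does hold, and your preliminary reduction of \eqref{extinction} to $\P_z(T_0<+\infty)=1$ is legitimate --- and it buys a self-contained treatment of those directions, at the cost of length; the paper instead disposes of them in three lines by noting that $s(Z)$ is a local martingale that must hit $s(0)$ before $s(+\infty)$, and then quotes the same Rogers--Williams theorem for both directions of the integral criterion. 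The only soft spot in your write-up is the phrase ``explodes upward'' in the case $s(+\infty)<+\infty$: what you actually obtain (and all you need) is $\P_z(T_a=+\infty)>0$, whether by explosion or by the path simply never returning to level $a$.
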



\me Note that  Condition \eqref{extinctionbis} is well known  in the case where $Z$ is solution of a stochastic differential equation (cf. \cite{Karatzas} p.348, \cite{Ikeda} p.450).

\begin{proof}
Assume first that \eqref{extinction} is satisfied. As $Z$ has scale $\,s$, $\,s(Z)\,$   is a local martingale on  $(s(0), s(+\infty))$ such that $T_{s(0)}^{s(Z)}<T_{s(+\infty)}^{s(Z)}$ a.s.. We deduce that  $s(0)>  - \infty$ and $s(+\infty)=+\infty$.    The diffusion $s(Z)$ has a natural scale with speed measure $\tilde m = m\circ s^{-1}$ (see \cite{Revuz}, Chapter VII). Since it  attains $s(0)$ in finite time almost surely, we deduce using 
 \cite[Theorem~51-2]{Rogers-Williams}  that  $\int_{s(0)+} (u-s(0))\,\tilde m(du)<+\infty$. As
 $\int_{s(0)+} (u-s(0))\,\tilde m(du)<+\infty \Longleftrightarrow \int_{0+} (s(y)-s(0))\,m(dy)<+\infty$, we obtain  \eqref{extinctionbis}.

\noindent  Conversely, assume \eqref{extinctionbis}. Conditions $s(0)>  - \infty$ and  $s(+\infty)=\pl \infty$ imply that the local martingale $s(Z)$  doesn't explode  a.s.. Since $\int_{0+} (s(y)-s(0))\,m(dy)<+\infty$, then $ \int_{s(0)+} (u-s(0))\,\tilde m(du)<+\infty$ and the process $s(Z)$ attains $s(0)$ in finite time a.s.,   so does  the process $Z$. That concludes the proof. 
\end{proof}

\bi Since the function $s$ is defined up to a constant, we choose by convention  $s(0)=0$ as soon as $s(0)>-\infty$.

\bi 
In the following theorem, we prove a $0-1$ law for the  finiteness/infiniteness  of perpetual integrals of diffusion processes and provide explicit bounds for their moments. This $0-1$ law has been extensively studied and already proved by different ways in the literature. Its first proof goes back to~\cite{EngelbertTittel2002} (see also~\cite{MijatovicUrusov2012}) using a combination of the local time formula, Ray-Knight Theorem and Jeulin's Lemma. Attempts to simplify this approach are provided in~\cite{Cui2014} for some stochastic differential equations using an appropriate space change. The almost sure finiteness criterion has also been recovered by simple means in~\cite{Khoshnevisan2006}, where the existence of a non-explicit exponential moment for perpetual integrals is also proved. Proofs of the 0-1 law in particular settings are given in \cite{Engelbert-Senf, Foucart-Henard}. In \cite{Salminen-Yor2}, the authors define perpetual integrals as the first hitting times of diffusion processes, and illustrate how the Laplace transform of some perpetual integrals can be found using Feynman-Kac formula.

\bi

\me \begin{theorem}
\label{Integ-Ext} Let $\,(Z_{t}, t\geq 0)\,$ be a regular diffusion process on $\,[0,+\infty)$ with  scale function $s$ and speed measure $m$ on $(0,+\infty)$ satisfying \eqref{extinctionbis}.
Let also $f$ be a non-negative measurable function on $(0,+\infty)$ which is locally integrable on $(0,+\infty)$.  Then, for all $z>0$ and all $n\geq 1$,
\begin{align*}
\E_z\left[\left(\int_{0}^{T_{0}} f(Z_{s})\, ds\right)^n\,\right]\leq n!\,\left(\int_0^\infty {s(y) \,f(y)} \,m(dy)\right)^n
\end{align*}
and
\begin{align*}
\int_{0^+} {s(y) \,f(y)} \,m(dy) < + \infty\ 
&\Longleftrightarrow\  \int_{0}^{T_{0}} f(Z_{s}) \,ds < + \infty \quad \P_z-\hbox{almost surely}\\
\int_{0^+} {s(y) \,f(y)} \,m(dy) = + \infty\ 
&\Longleftrightarrow\  \int_{0}^{T_{0}} f(Z_{s})\, ds = + \infty \quad \P_z-\hbox{almost surely}.
\end{align*}
\end{theorem}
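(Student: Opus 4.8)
The plan is to isolate an explicit Green-function (occupation) identity for the first moment, bootstrap it to all moments by the Markov property, and then treat the two directions of the $0$--$1$ law by elementary large-deviation and independence arguments instead of local-time machinery.

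First I would record the occupation formula. Writing $A=\int_0^{T_0}f(Z_s)\,ds$, I claim that for every $z>0$,
\[
\E_z[A]=\int_0^\infty s(z\wedge y)\,f(y)\,m(dy)\le\int_0^\infty s(y)f(y)\,m(dy)=:M .
\]
This I would get from the classical interval Green function: on $(0,b)$ with absorption at $\{0,b\}$ the process has kernel $G_b(z,y)=s(z\wedge y)\,(s(b)-s(z\vee y))/s(b)$ (using $s(0)=0$), so that $\E_z[\int_0^{T_0\wedge T_b}f(Z)\,ds]=\int_0^b G_b(z,y)f\,m$. Since $s(+\infty)=+\infty$ forces $T_b\uparrow+\infty$ while each trajectory is bounded, one has $T_0\wedge T_b\uparrow T_0$, and monotone convergence (with $G_b(z,y)\uparrow s(z\wedge y)$) yields the identity. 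The localized versions are what I will really use: $\E_z[\int_0^{T_0}f(Z_s)\mathbf 1_{Z_s<a}\,ds]=\int_0^a s(y)f\,m$ for $a\le z$, and $\E_z[\int_0^{T_0}f(Z_s)\mathbf 1_{a\le Z_s\le b}\,ds]\le s(b)\int_a^b f\,m<\infty$.

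For the moment bound I would use the pathwise identity $A^n=n\int_0^{T_0}f(Z_t)\big(\int_t^{T_0}f(Z_s)\,ds\big)^{n-1}dt$ (differentiate $t\mapsto(\int_t^{T_0}f(Z_s)ds)^n$), take $\E_z$, and condition on $\mathcal F_t$: the strong Markov property gives $\E_z[(\int_t^{T_0}f\,ds)^{n-1}\mid\mathcal F_t]=\E_{Z_t}[A^{n-1}]$ on $\{t<T_0\}$. An induction on $n$, feeding in $\E_w[A^{n-1}]\le(n-1)!\,M^{n-1}$ together with $\E_z[\int_0^{T_0}f\,dt]\le M$, then produces $\E_z[A^n]\le n!\,M^{n}$; in particular $M<\infty$ forces all moments finite.

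For the $0$--$1$ law the finiteness direction is now immediate: if $\int_{0^+}sf\,m<\infty$, pick $a\le z$ with $\int_0^a sf\,m<\infty$; the band identity makes $\E_z[\int_0^{T_0}f\mathbf 1_{Z<a}\,ds]$ finite, hence that part is a.s. finite, while the part on $\{Z\ge a\}$ is a.s. finite because each path is bounded and the band estimates have finite expectation on $\{a\le Z\le b\}$ for every $b$. The delicate direction, which I expect to be the main obstacle, is to pass from $\int_{0^+}sf\,m=+\infty$ to $A=+\infty$ almost surely, since there only the expectation is infinite. I would split this into two independent steps. Step one (positivity): for $a\le z$ set $B^\epsilon=\int_0^{T_0}f(Z_s)\mathbf 1_{\epsilon<Z_s<a}\,ds$, so $\E_z[B^\epsilon]=\int_\epsilon^a sf\,m=:\mu_\epsilon\uparrow+\infty$; the same recursion as above (now with $f\mathbf 1_{(\epsilon,a)}$, using $\E_w[B^\epsilon]\le\mu_\epsilon$) gives $\E_z[(B^\epsilon)^2]\le2\mu_\epsilon^2$, whence Paley--Zygmund yields $\P_z(B^\epsilon\ge\tfrac12\mu_\epsilon)\ge\tfrac18$, and letting $\epsilon\downarrow0$ gives $\P_z(A=+\infty)\ge\tfrac18>0$. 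Step two (the $0$--$1$ upgrade): fixing $a_k\downarrow0$ with $a_0\le z$ and hitting times $T^{(k)}=\inf\{t:Z_t=a_k\}\uparrow T_0$, the blocks $C_k=\int_{T^{(k)}}^{T^{(k+1)}}f(Z_s)\,ds$ are \emph{independent} by the strong Markov property (the conditional law of $C_k$ given $\mathcal F_{T^{(k)}}$ depends only on $Z_{T^{(k)}}=a_k$), each is a.s. finite, and $A=\int_0^{T^{(0)}}f\,ds+\sum_k C_k$ with the first term a.s. finite; thus $\{A=+\infty\}=\{\sum_k C_k=+\infty\}$ is a tail event of an independent sequence, so Kolmogorov's $0$--$1$ law forces its probability into $\{0,1\}$, and Step one pins it at $1$. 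This is precisely where I replace the usual Ray--Knight/Jeulin machinery by independence plus a second-moment estimate; the points needing care are the interpretation of ``locally integrable'' (so the away-from-$0$ integrals have finite expectation), the justification of $T_0\wedge T_b\uparrow T_0$ and the monotone limit in the Green function, and the measurability that makes the $C_k$ genuinely independent.
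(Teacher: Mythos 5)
Your proposal is correct and follows essentially the same route as the paper's proof: the Green/occupation formula for the first moment, the pathwise identity $A^n=n\int_0^{T_0}f(Z_t)(\int_t^{T_0}f\,ds)^{n-1}dt$ with the Markov property and induction for the moment bound, a second-moment/Paley--Zygmund estimate to get positive probability of divergence, and independence of the excursion blocks between successive hitting times to upgrade to a $0$--$1$ law. The only differences are cosmetic (you truncate spatially via $f\mathbf{1}_{(\epsilon,a)}$ where the paper truncates the values of $f$ near $0$, and you derive the Green formula from the interval kernel rather than citing it), so no further comparison is needed.
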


\me \begin{proof} Because of the non-explosion assumption~\eqref{extinctionbis}, we have $\int_0^{T_0} f(Z_s)\,ds<\infty \Leftrightarrow \forall k\in\N,$ $\int_0^{T_0} f(Z_s)\mathbf{1}_{Z_s\leq k}\,ds<\infty$ and $\int_0^{T_0} f(Z_s)\,ds=\infty \Leftrightarrow \exists k\in\N$ such that  $\int_0^{T_0} f(Z_s)\mathbf{1}_{Z_s\leq k}\,ds=\infty$. Hence it is sufficient to prove Theorem~\ref{Integ-Ext-bounded} for functions $f$ satisfying $\int_a^\infty f(x)\,s(x)\,m(dx)<\infty$ for all $a>0$. We make this assumption from now on.

As $Z$ has  scale function  $s$ and speed measure $m$, the process $s(Z)$ is on a natural scale with speed measure $m\circ s^{-1}$.  Then it is enough to prove the result if the process $\,Z\,$ is on a natural scale ($s=identity$); the general case will follow immediately. In particular,  we have the following Green formula (see~[Chapter~23] of \cite{Kallenberg})

\begin{align*}
\E_x\Big(\int_0^{T_0} f(Z_s)\,ds\Big)&= \int_{(0,+\infty)} 2\,(x\wedge y)\,f(y)\,m(dy).
\end{align*}
One easily checks that, under $\P_x$ for any $x\in (0,+\infty)$, $\int_0^{T_0} f(Z_s)\,ds<+\infty$ satisfies a $0-1$ law. Indeed, we have
\begin{align*}
\int_0^{T_0} f(Z_s)\,ds=\sum_{k=1}^\infty \int_{T_{x/k}}^{T_{x/(k+1)}} f(Z_s)\,ds,
\end{align*}
where the $\int_{T_{x/k}}^{T_{x/(k+1)}} f(Z_s)\,ds$, $k\geq 1$, are non-negative independent (because of the strong Markov property) random variables which are almost surely finite (in fact with finite expectation, because of our assumptions and the Green's formula applied under $\P_{x/k}$ up to time $T_{x/{k+1}}$). Hence the above series is finite with probability zero or one.

\medskip
Assume first that $\int_{(0,+\infty)} y\,f(y)\,m(dy)<+\infty$. Then $\int_0^{T_0}f(Z_s)ds<\infty$ almost surely and, for all $n\geq 1$,
\begin{align*}
\E_x\left[\left(\int_0^{T_0}f(Z_s)ds\right)^n\right]
&=\E_x\left[n\int_{0}^{T_0}f(Z_s)\left(\int_{s}^{T_0}f(Z_u)\,du\right)^{n-1}ds\right]\\
&=n\,\int_0^\infty\E_x\left[\mathbf{1}_{s<T_0} f(Z_s)\left(\int_{s}^{T_0}f(Z_u)\,du\right)^{n-1}\right]\,ds\\
&=n\,\E_x\left[\int_0^{T_0}f(Z_s)\E_{Z_s}\left(\left(\int_0^{T_0}f(Z_u)du\right)^{n-1}\right)ds\right],
\end{align*}
where we used the Markov property. We immediately deduce by induction that
\begin{align*}
\E_x\left[\left(\int_0^{T_0}f(Z_s)ds\right)^n\right]\leq n! \left(\int_{(0,+\infty)}\,2yf(y)m(dy)\right)^n.
\end{align*}
This concludes the proof of the first part of Theorem~\ref{Integ-Ext} (the inequality is trivial when $\int_{(0,+\infty)} y\,f(y)\,m(dy)=+\infty$). 

\medskip
Assume now that $\int_{(0,+\infty)} y\,f(y)\,m(dy)=+\infty$ and fix $x\in(0,+\infty)$. For all $k\geq 1$, we set
\begin{align*}
f_k(y)=\begin{cases}
f(y)&\text{if }y\geq 1\\
f(y)\wedge k&\text{if }y<1.
\end{cases}
\end{align*}
In particular, $\int_{(0,+\infty)} f_k(y)\,y\,m(dy)<\infty$ for all $k\geq 1$ and hence, using the inequalities established above and then the fact that $\int_{(0,+\infty)}\,2yf_k(y)\,m(dy)$ goes to infinity and the fact that $yf(y)m(dy)$ is assumed to be finite on neighborhood of $+\infty$, we deduce that
\begin{align*}
\E_x&\left[\left(\int_0^{T_0}f_k(Z_s)ds\right)^2\right]\leq 2 \left(\int_{(0,+\infty)}\,2y\,f_k(y)\,m(dy)\right)^2\\
&\quad\quad\begin{aligned}
&\leq 2\left(\int_{(0,+\infty)}2\,(y\wedge x) f_k(y)\,m(dy)+\int_x^{\infty} 2(y-x)f(y)\,m(dy)\right)^2\\
&\leq 4\,\left(\int_{(0,+\infty)}2\,(y\wedge x) f_k(y)\,m(dy)\right)^2+4\left(\int_x^{\infty} 2(y-x)f(y)\,m(dy)\right)^2\\
&\leq 5\,\left(\int_{(0,+\infty)}2\,(y\wedge x) f_k(y)\,m(dy)\right)^2\quad\text{ for $k$ large enough}\\
&\leq 5\,\left[\E_x\left(\int_0^{T_0}f_k(Z_s)ds\right)\right]^2 \quad\text{ for $k$ large enough}.
\end{aligned}
\end{align*}
We deduce that, for $k$ large enough,
\begin{align*}
\P_x\Big(\int_0^{T_0}f_k(Z_s)ds\geq \frac{\E_x\left(\int_0^{T_0}f_k(Z_s)ds\right)}{2}\Big)\geq\frac{1}{20}.
\end{align*}
Indeed, for any random variable $Y\geq 0$ such that $\E(Y^2)\leq 5\E(Y)^2$, we have, setting $M=\E(Y)$,
\begin{align*}
5M^2&\geq \E(Y^2)\geq \E(Y^2\mid Y\geq M/2)\P(Y\geq M/2)\\
&\geq  \E(Y\mid Y\geq M/2)^2\,\P(Y\geq M/2)\\
&=\frac{\E(Y\mathbf{1}_{Y\geq M/2})^2}{\P(Y\geq M/2)}\geq \frac{M^2/4}{\P(Y\geq M/2)}
\end{align*}
and hence $\P(Y\geq M/2)\geq 1/20$.
Now using the fact that $f_k$ is increasing in $k$, we deduce that, for $k$ large enough,
\begin{align*}
\P_x\Big(\int_0^{T_0}f(Z_s)ds\geq \frac{\E_x\left(\int_0^{T_0}f_k(Z_s)ds\right)}{2}\Big)\geq 1/20.
\end{align*}
Since $\E_x\left(\int_0^{T_0}f_k(Z_s)ds\right)$ is not bounded in $k$, we deduce that
\begin{align*}
\P_x\Big(\int_0^{T_0}f(Z_s)ds=+\infty\Big)\geq 1/20.
\end{align*}
This and the fact that $\{\int_0^{T_0}f(Z_s)ds=+\infty\}$ satisfies a $0-1$ law conclude the proof of Theorem~\ref{Integ-Ext}.
\end{proof}

\bi The equivalences stated  in Theorem \ref{Integ-Ext} are particularly  useful when $\,Z\,$ is solution of a  one\t dimensional stochastic differential equation
\be
\label{SDE}dZ\T t \eq  \sigma(Z\T t ) d B\T t \pl b(Z\T t ) dt\quad;\quad Z\T 0 >0,\ee 
where $B$ is a one dimensional Brownian motion, and  $\sigma:(0,+\infty)\rightarrow (0,+\infty)$ and $b:(0,+\infty)\rightarrow\R$ are measurable functions such that $b/\sigma^2$ is locally integrable and such that $m(]a,b[)\in(0,+\infty)$ for all $0<a<b<+\infty$. Here $s$ and $m$ are the scale function (up to a constant) and speed measure equal to
\be
\label{scalespeed}s(x) = \int_{c}^x
\exp\Big(-2 \int_{c}^y\frac{b(z)}{\sigma^2(z)} dz \Big)dy \quad ; \quad m(dx) = \frac{2 dx}{s'(x)\,\sigma^2(x)},\ee
as detailed in Chapter~23 of~\cite{Kallenberg}. In particular, $Z$ is a regular diffusion.

\me 
\begin{corollary}
\label{cor:to}
Assume that $Z$ is solution of \eqref{SDE} with $\,s(+\infty)=+\infty\,$ and $\,\int_{0+} s(y)\,m(dy)<+\infty$.
Let us consider a non negative locally bounded measurable function $f$ on $(0,+\infty)$. Then, under $\mathbb{P}_z$,
\begin{align*}
\int_{0^+} \frac{f(y)s(y)}{s'(y)\sigma^2(y)} \,dy = + \infty\quad
&\Longleftrightarrow\quad \int_{0}^{T_{0}}f(Z_{s}) \,ds = + \infty \quad \hbox{almost surely},\\
\int_{0^+} \frac{f(y)s(y)}{s'(y)\sigma^2(y)} \,dy < + \infty\quad
&\Longleftrightarrow\quad \int_{0}^{T_{0}} f(Z_{s}) \, ds < + \infty \quad \hbox{almost surely}.
\end{align*}
 \end{corollary}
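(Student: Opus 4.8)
The plan is to obtain Corollary~\ref{cor:to} as a direct specialization of Theorem~\ref{Integ-Ext} to the one-dimensional diffusion \eqref{SDE}, feeding in the explicit scale function and speed measure recorded in \eqref{scalespeed}. The whole argument reduces to two steps: (i) checking that the standing hypotheses of the theorem are satisfied under the assumptions of the corollary, and (ii) rewriting the integral criterion of the theorem using the explicit form of $m$.

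For step (i), recall that any solution of \eqref{SDE} is a regular diffusion on $[0,+\infty)$, as noted right after \eqref{scalespeed}, with scale function $s$ and speed measure $m$ given by \eqref{scalespeed}. Adopting the convention $s(0)=0$ (which presupposes $s(0)>-\infty$), the two assumptions of the corollary, namely $s(+\infty)=+\infty$ and $\int_{0+}s(y)\,m(dy)<+\infty$, are exactly the three requirements listed in \eqref{extinctionbis}; by Lemma~\ref{lemextinction} this is equivalent to the absorption property \eqref{extinction}. Moreover, a nonnegative locally bounded $f$ is in particular locally integrable on $(0,+\infty)$. Hence all the hypotheses of Theorem~\ref{Integ-Ext} are in force, and its conclusion applies verbatim to $Z$ and $f$.

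For step (ii), I would simply substitute $m(dy)=\frac{2\,dy}{s'(y)\,\sigma^2(y)}$ from \eqref{scalespeed} into the theorem's integrand, which gives
\begin{align*}
\int_{0^+} s(y)\,f(y)\,m(dy)=2\int_{0^+}\frac{f(y)\,s(y)}{s'(y)\,\sigma^2(y)}\,dy.
\end{align*}
Since the constant factor $2$ does not affect finiteness, the two equivalences of Theorem~\ref{Integ-Ext} translate term by term into the two equivalences claimed in the corollary, which completes the proof.

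Because the statement is a specialization, there is no genuine analytic obstacle; the only point requiring care is the bookkeeping of hypotheses, namely verifying that $s(+\infty)=+\infty$ and $\int_{0+}s(y)\,m(dy)<+\infty$ together with the convention $s(0)=0$ do reproduce all of \eqref{extinctionbis} (in particular the implicit $s(0)>-\infty$), so that the absorption hypothesis underlying Theorem~\ref{Integ-Ext} is legitimately satisfied. The reduction showing that only the behaviour of $s\,f\,m$ near $0$ matters --- the contribution on compact subsets of $(0,+\infty)$ being finite by local integrability, and the behaviour near $+\infty$ being irrelevant by the truncation by $\mathbf{1}_{Z_s\leq k}$ --- is already built into Theorem~\ref{Integ-Ext}, so nothing further is needed here.
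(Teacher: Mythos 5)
Your proposal is correct and is exactly the argument the paper intends: the corollary is stated without proof precisely because it is the direct specialization of Theorem~\ref{Integ-Ext} obtained by substituting the explicit speed measure $m(dy)=2\,dy/(s'(y)\sigma^2(y))$ from \eqref{scalespeed}, after checking via Lemma~\ref{lemextinction} that the hypotheses reproduce \eqref{extinctionbis}. Your bookkeeping of the normalization $s(0)=0$ and of the irrelevance of the factor $2$ matches what the paper leaves implicit.
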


\bi Let us give two examples for  population size processes.

\bi \begin{example} Branching process with immigration. Let us consider  the  solution of the stochastic differential equation
$$dN_{t} = \sigma \sqrt{N_{t}} dB_{t} + \beta dt, \ \beta>0.$$
  The scale function   $s(x) = \int_{1}^x \exp\Big(-\int_{1}^y {\frac{2\beta}{\sigma^2 z}} dz\Big) dy =  {\frac{\sigma^2}{\sigma^2-{2\beta}}} \big(x^{1-{\frac{2\beta}{\sigma^2}}}-1\big)$ and $m(dx) = \frac{2 x^{2\beta/\sigma^2}dx}{\sigma^2 x}\,$ except when $\beta/ \sigma^2 \eq 1/2$ for which $s(x)\eq \log x $ and $m(dx) \eq \frac{ 2 dx}{ \sigma^2}$, cf. \eqref{scalespeed}.
Then $$\eqref{extinction} \ \Longleftrightarrow  \ \beta/ \sigma^2 < 1/2.$$
\me Applying Corollary \ref{cor:to}  with $\,f(y) = 1/y^\alpha$, we obtain
\be
\label{int}
\int_{0}^{T_{0}} \frac{1}{(N_{s})^\alpha} \,ds &=& + \infty\quad  a.s. \ \Longleftrightarrow \alpha\geq 1\nonumber;\\
\int_{0}^{T_{0}} \frac{1}{ (N_{s})^\alpha} \,ds &<& + \infty\quad  a.s. \ \Longleftrightarrow \alpha< 1
\ee
since  
$ \int_{0^+} \frac{1}{y^\alpha} \,dy = + \infty\Longleftrightarrow \alpha\geq 1$.
In the particular case $\alpha=1$, the authors of~\cite{Foucart-Henard} propose an other approach based on self-similarity properties.
\end{example}
\bi \begin{example} Logistic diffusion process. Let us consider the process 
$$dN_t=\sqrt{N_t}\,dB_t  + N_t\,(b - c\,N_t)\,dt \ ;\ N_0>0, $$ where  $\,b, c>0$. 
Then $\, s(y) = \int_0^y e^{c z^2  -  2 bz } dz\,$ and $\,m(dy) = \displaystyle\frac{2 e^{ - c y^2   +  2 by }}{y} dy$ and $\,\int_{0^+} s(y) m(dy) <+\infty$, since $\,\frac{s(y)}{s'(y)\,y} \to\T {y\to 0} 1$. (Note that if $\,c= 0$, the condition $s(+\infty) = +\infty$ is not satisfied). It is immediate to check that   \eqref{int} also holds.
\end{example}

\subsection{General diffusion processes on $(a,b)$}

Let us consider a general diffusion process $\,(X_{t}, t\geq 0)\,$ with   scale function $\,s\,$ and  locally finite speed measure $m$ on $(a,b)$, with $-\infty<a<b<+\infty$. Let us denote by $T_{a}$ and $T_b$ the hitting times of $a$ and $b$ respectively by the process $X$. We assume that, for all $x\in(a,b)$, $\mathbb{P}_x(T_a\wedge T_b<+\infty)=1$. This is the case if and only if one of the following properties is satisfied

\smallskip \noindent $(i) -\infty<s(a)<s(b)<+\infty$ ; $\int_{a^+} (s(y)-s(a))\,m(dy)<+\infty$ and $\int^{b^-}(s(b)-s(y)\,m(dy)<+\infty$;

\me $(ii) -\infty<s(a)$ and $s(b)=+\infty$ ;  $\int_{a^+} (s(y)-s(a))\,m(dy)<+\infty$;

\smallskip \noindent $(iii) \,s(a)=-\infty$ and $s(b)<+\infty$ ;  $\int^{b^-}(s(b)-s(y))\,m(dy)<+\infty$.

\bi
\begin{theorem}
\label{Integ-Ext-bounded}
Fix $x\in(a,b)$ and let $f:(a,b)\rightarrow \R_+$ be a locally bounded measurable function. Then \begin{align*}
&\int_{a^+} {(s(y)-s(a)) \,f(y)} m(dy) = + \infty\  \Longleftrightarrow\\
&\hskip 3cm  \P_x\Big(\{\int_{0}^{T_{a}} f(X_{s}) ds = + \infty\}\cap \{T_a<T_b\}\Big) =  \P_x\big(T_a<T_b\big)\\   
&\int_{a^+} {(s(y)-s(a)) \,f(y)} m(dy) < + \infty\  \Longleftrightarrow\\
&\hskip 3cm \ \P_x\Big(\{\int_{0}^{T_{a}} f(X_{s}) ds < + \infty\}\cap \{T_a<T_b\}\Big) =  \P_x\big(T_a<T_b\big) 
\end{align*}
A similar result holds at the boundary $b$:
\begin{align*}
&\int^{b^-} {(s(b)-s(y)) \,f(y)} m(dy) = + \infty\  \Longleftrightarrow\\
&\hskip 3cm \P_x\Big(\{\int_{0}^{T_{b}} f(X_{s}) ds  = + \infty\}\cap \{T_b<T_a\}\Big) =  \P_x\big(T_b<T_a\big) \\       
&\int^{b^-} {(s(b)-s(y)) \,f(y)} m(dy) < + \infty \
\Longleftrightarrow\\
&\hskip 3cm \P_x\Big(\{\int_{0}^{T_{b}} f(X_{s}) ds < + \infty\}\cap \{T_b<T_a\}\Big) =  \P_x\big(T_b<T_a\big).
\end{align*}
\end{theorem}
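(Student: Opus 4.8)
The plan is to deduce the two-boundary statement from the one-boundary Theorem~\ref{Integ-Ext} by conditioning the diffusion to leave $(a,b)$ through the endpoint at which integrability is tested. I describe the argument at the boundary $a$; the assertions at $b$ follow by applying the same reasoning to the reflected diffusion $a+b-X$, which interchanges the roles of $a$ and $b$ (and of the two integrability conditions). For the boundary $a$ only cases $(i)$ and $(ii)$ carry content: in case $(iii)$ one has $s(a)=-\infty$, so $a$ is inaccessible, $T_a=+\infty$ $\P_x$-almost surely and $\P_x(T_a<T_b)=0$, and the event $\{T_a<T_b\}$ entering the statement is negligible.

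In case $(i)$, where $s(a)$ and $s(b)$ are finite, the function $h(x)=\P_x(T_a<T_b)=\frac{s(b)-s(x)}{s(b)-s(a)}$ is $s$-harmonic, and Doob's $h$-transform turns the law of $X$ conditioned on $\{T_a<T_b\}$ into the law of a regular diffusion $\widehat X$ with scale and speed
\[
d\widehat s=\frac{ds}{h^2},\qquad \widehat m=h^2\,m .
\]
A short computation shows that $\widehat s(a)$ is finite whereas $\widehat s(b)=+\infty$, so $b$ is inaccessible for $\widehat X$, and that $\widehat X$ reaches $a$ almost surely; by Lemma~\ref{lemextinction}, applied with $a$ playing the role of $0$, the diffusion $\widehat X$ satisfies~\eqref{extinctionbis}. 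The decisive point is that the conditioning merely reweights the paths on $\{T_a<T_b\}$ without changing the time parametrization, so along each path the value of $\int_0^{T_a}f(X_s)\,ds$ is unaltered; hence the conditional law of this integral under $\P_x(\,\cdot\mid T_a<T_b)$ coincides with the law of $\int_0^{T_a}f(\widehat X_s)\,ds$ under $\widehat X$. In case $(ii)$, where $s(b)=+\infty$, the endpoint $b$ is already inaccessible, $\P_x(T_a<T_b)=1$, and one takes $\widehat X=X$ with no conditioning.

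Theorem~\ref{Integ-Ext}, applied to $\widehat X$ after passing to its natural scale—which sends the inaccessible endpoint $b$ to $+\infty$ and $a$ to the origin—then yields the $0$–$1$ law for $\int_0^{T_a}f(\widehat X_s)\,ds$ and the equivalence of its almost-sure finiteness with $\int_{a^+}(\widehat s(y)-\widehat s(a))\,f(y)\,\widehat m(dy)<+\infty$. Transporting this back through the conditioning turns the conditional $0$–$1$ law into precisely the stated identities, namely $\P_x(\{\int_0^{T_a}f(X_s)\,ds<\infty\}\cap\{T_a<T_b\})=\P_x(T_a<T_b)$ and its counterpart with $=\infty$. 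Finally one identifies the two criteria: inserting the expressions for $\widehat s$ and $\widehat m$ gives
\[
\big(\widehat s(y)-\widehat s(a)\big)\,f(y)\,\widehat m(dy)=\frac{\big(s(y)-s(a)\big)\big(s(b)-s(y)\big)}{s(b)-s(a)}\,f(y)\,m(dy),
\]
and since $\big(s(b)-s(y)\big)/\big(s(b)-s(a)\big)\to1$ as $y\to a$, this integrand is comparable, up to fixed positive constants, to $\big(s(y)-s(a)\big)f(y)\,m(dy)$ on a right neighbourhood of $a$. The two integrals are therefore simultaneously finite or infinite, which is exactly the passage from the criterion for $\widehat X$ to the one in the statement.

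The main obstacle is the clean justification of the $h$-transform: that the diffusion conditioned on $\{T_a<T_b\}$ is again a regular diffusion with scale $\widehat s$ and speed $\widehat m$ as above, that $a$ is then hit almost surely while $b$ becomes inaccessible so that the hypotheses of Theorem~\ref{Integ-Ext} hold, and that the change of measure leaves the pathwise value of the perpetual integral unchanged. Granting these points, the remaining steps—the scale and speed computation, the comparison of the two criteria near $a$, and the symmetric treatment of the boundary $b$—are routine.
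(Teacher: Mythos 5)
Your argument is correct in substance but follows a genuinely different route from the paper. The paper localizes: after reducing to natural scale on $(0,1)$, it builds for each $\varepsilon$ an auxiliary diffusion $X^{\varepsilon}$ on the whole half-line whose speed measure agrees with $m$ on $(0,1-\varepsilon)$ and which is realized as a time change of the \emph{same} Brownian motion, so that $X$ and $X^{\varepsilon}$ coincide up to $T_0$ on $\{T_0<T_{1-\varepsilon}\}$; Theorem~\ref{Integ-Ext} is applied to $X^{\varepsilon}$ and the conclusion is recovered on $\{T_0<T_1\}=\bigcup_{\varepsilon}\{T_0<T_{1-\varepsilon}\}$. You instead condition on $\{T_a<T_b\}$ via the Doob $h$-transform with $h(x)=\P_x(T_a<T_b)$, identify the conditioned process as a diffusion with $d\widehat s=ds/h^2$, $\widehat m=h^2m$ for which $b$ is inaccessible and $a$ is reached a.s., and apply Theorem~\ref{Integ-Ext} to it; your computation
$(\widehat s(y)-\widehat s(a))\,\widehat m(dy)=\frac{(s(y)-s(a))(s(b)-s(y))}{s(b)-s(a)}\,f(y)^{-1}\cdot f(y)\,m(dy)$, together with $(s(b)-s(y))/(s(b)-s(a))\to 1$ as $y\to a$, correctly identifies the two criteria, and the reflection $a+b-X$ and the degenerate case $(iii)$ are handled appropriately. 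What your approach buys is conceptual transparency: it exhibits the conditional law of the perpetual integral given $\{T_a<T_b\}$ as the law of a genuine half-line perpetual integral, rather than arguing by inclusion of events. What it costs is the reliance on the classical but nontrivial fact that conditioning a regular diffusion on $\{T_a<T_b\}$ yields again a regular diffusion with the stated scale and speed (and that the change of measure is on path space, hence leaves the pathwise value of $\int_0^{T_a}f(X_s)\,ds$ untouched); you rightly flag this as the main obstacle, and it would need a citation (e.g.\ to the $h$-transform theory for one-dimensional diffusions) or a proof, whereas the paper's coupling argument is self-contained, using only the time-change construction of diffusions. Neither proof is more general than the other; yours is shorter once the $h$-transform input is granted.
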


\me \begin{proof}
As in the proof of Theorem \ref{Integ-Ext}, it is enough to prove the result in the case where  $\,s\,$  is the identity function. 

\smallskip \noindent 
Without loss of generality, we take $(a,b)=(0,1)$. Let us consider $x\in (0,1)$, fix $\varepsilon\in(0,1-x)$ and consider a locally finite measure $m^{\varepsilon}$ on $(0,+\infty)$ such that $\restriction{m^\varepsilon}{(0,1-\varepsilon)}=\restriction{m}{(0,1-\varepsilon)}$. Let $X^\varepsilon$ be a diffusion process on natural scale on $(0,+\infty)$ with speed measure  $m^\varepsilon$ and starting from $x$, built as a time change of the same Brownian motion as $X$.
Because of this construction, $X$ and $X^\varepsilon$ coincide up to time $T_0$ on the event $\{T_0<T_{1-\varepsilon}\}$. 

Now, by Theorem~\ref{Integ-Ext} applied to $X^\varepsilon$ and $f^{\varepsilon}:y\mapsto f(y)\un_{y\leq 1-\varepsilon}$, we deduce that
\begin{align*}
 \int_{0}^{T_{0}} f(X^\varepsilon_{s})\un_{X^\varepsilon_s\leq 1-\varepsilon} \,ds = + \infty \quad\hbox{almost surely}\ &\Longleftrightarrow\ \int_{0^+} {y \,f(y)} m(dy) = + \infty,\\
\int_{0}^{T_{0}} f(X^\varepsilon_{s})\un_{X^\varepsilon_s\leq 1-\varepsilon}\, ds < + \infty \quad \hbox{almost surely}\ &\Longleftrightarrow\ \int_{0^+} {y \,f(y)} m(dy) < + \infty.
\end{align*}
Since $X$ and $X^\varepsilon$ coincide up to time $T_0$ on the event $T_0<T_{1-\varepsilon}$, we deduce that, up to negligible events,
\begin{align*}
\int_{0^+} {y \,f(y)} m(dy) = + \infty\quad&\Longrightarrow \quad  \int_{0}^{T_{0}} f(X_{s})\un_{X_s\leq 1-\varepsilon} \,ds = + \infty\ \hbox{on $T_0<T_{1-\varepsilon}$.}\\
\int_{0^+} {y \,f(y)} m(dy) < + \infty\quad&\Longrightarrow \quad  \int_{0}^{T_{0}} f(X_{s})\un_{X_s\leq 1-\varepsilon}\, ds < + \infty\ \hbox{on $T_0<T_{1-\varepsilon}$.}
\end{align*}
But  on  $T_0<T_{1-\varepsilon}$,  $X_s\leq 1-\varepsilon$ holds for $s\leq T_0$, so that, up to $\P_{x}$-negligible events,
\begin{align*}
\int_{0^+} {y \,f(y)} m(dy) = + \infty\quad&\Longrightarrow \quad  \int_{0}^{T_{0}} f(X_{s})ds = + \infty\quad\hbox{on $T_0<T_{1-\varepsilon}$.}\\
\int_{0^+} {y \,f(y)} m(dy) < + \infty\quad&\Longrightarrow \quad  \int_{0}^{T_{0}} f(X_{s}) ds < + \infty\quad\hbox{on $T_0<T_{1-\varepsilon}$.}
\end{align*}
The continuity of the paths of $X$ implies that
$$\{T_0<T_1\}=\cup_{0<\varepsilon < 1-x} \{T_0<T_{1-\varepsilon}\},$$
which yields, up to negligible events,
\begin{align*}
\int_{0^+} {y \,f(y)} m(dy) = + \infty\quad&\Longrightarrow \quad  \int_{0}^{T_{0}} f(X_{s})ds = + \infty\quad\hbox{on $T_0<T_{1}$.}\\
\int_{0^+} {y \,f(y)} m(dy) < + \infty\quad&\Longrightarrow \quad  \int_{0}^{T_{0}} f(X_{s}) ds < + \infty\quad\hbox{on $T_0<T_{1}$.}
\end{align*}
This concludes the proof of the direct implications in Theorem~\ref{Integ-Ext-bounded}.

\me Now, assume for instance that $\int_{0}^{T_{0}} f(X_{s})ds = + \infty$ on $T_0<T_{1}$. Then, \textit{a fortiori}, $\int_{0}^{T_{0}} f(X_{s})ds = + \infty$ on $T_0<T_{1-\varepsilon}$ for any $\varepsilon\in(0,1-x)$. This implies that $\int_{0}^{T_{0}} f(X^\varepsilon_{s})ds = + \infty$ on $T_0<T_{1-\varepsilon}$. But $T_0<T_{1-\varepsilon}$ happens with probability $x/(1-\varepsilon)>0$ by definition of  the natural scale. We deduce from Theorem~\ref{Integ-Ext} that $\int_{0^+} {y \,f(y)} m(dy) < + \infty$ does not hold and hence, because $f$ is non-negative,  that $\int_{0^+} {y \,f(y)} m(dy) = + \infty$. This provides the first $\Leftarrow$ implication in Theorem~\ref{Integ-Ext-bounded}. The second $\Leftarrow$ implication in Theorem~\ref{Integ-Ext-bounded} is proved using similar arguments.

\me The result at boundary $b$ is proved similarly.
\end{proof}

\bi
Let us illustrate Theorem~\ref{Integ-Ext-bounded} by simple examples from population genetics, which will be used as  central arguments in Section \ref{camille}.

\bi
\begin{example}
\label{cor:wright-fisher} The neutral Wright-Fisher  diffusion.
Let $X$ be the stochastic process solution of 
$$dX_{t} = \sqrt{X_{t}(1-X_{t})} \,dB_{t}\ ;\ X_{0}\in (0,1).$$
 The process $X$  is on natural scale on $(0,1)$ with speed measure $m(dy)=\frac{dy}{y(1-y)}$. Since
$\int_{0+} y\,m(dy)<+\infty\ \hbox{and}\  \int^{1-} (1-y)\,m(dy)<+\infty,
$
it  reaches $0$ or $1$ in finite time a.s..

\me Now, setting $\,f(y)=1/(1-y)$, we have
$\,
\int^{1-} (1-y)f(y)\,m(dy)=+\infty
\,$
and  Theorem~\ref{Integ-Ext-bounded}  yields
\begin{align*}
\mathbb{P}_{x}\Big(\{\int_{0}^{T_{1}}  \frac{1}{1-X_{s}} \,ds = + \infty \} \cap\{T_1< T_0\}\Big)  = \mathbb{P}_{x}\big(T_1< T_0\big) 
\end{align*}
for any $x\in (0,1)$.

\me Therefore, since $\{T_1=+\infty\}=\{T_0<T_1\}$ and $1/(1-X_{t})=1$ for all $t\geq T_0$,
\begin{align}
\label{integ-fix}
\mathbb{P}_{x}\big(\int_{0}^{T_{1}}  \frac{1}{1-X_{s}} \,ds = + \infty\big) = 1.
\end{align}
\end{example}

\bi
\begin{example}
\label{cor:wright-fisher-selection}
The Wright-Fisher diffusion with selection.   Let  $Y$ be the process solution of $$dY_{t} = \sqrt{Y_{t}(1-Y_{t})} \,dB_{t} + r\,Y_{t}\,(1-Y_{t}) dt ;\ Y_{0}\in (0,1).$$
Its scale function on $(0,1)$ is given by
$\,s(x) = \frac{1}{2r}(1 -  e^{-2rx})\,$ and its
  speed measure is $\,m(dy)=\frac{2dy}{  e^{-2ry}\,y(1-y)}$.
  In particular, we deduce that
$\,
\int_{0+} s(y)\,m(dy)<+\infty\quad\hbox{and}\quad \int^{1-} (s(1)-s(y))\,m(dy)<\infty.$ Hence, the process reaches $0$ or $1$ in finite time a.s.. 
Setting as in the previous example $\,f(y)= 1/(1-y)$, we have
$\,\int^{1-} (s(1)-s(y))f(y)\,m(dy)=+\infty\,$
and  Theorem~\ref{Integ-Ext-bounded} yields
\begin{align}
\label{integ-fix-sel}
\mathbb{P}_{x}\big(\int_{0}^{T_{1}}  \frac{1}{1-Y_{s}} \,ds = + \infty  \big)  = 1. 
\end{align}
 \end{example}

\me
\subsection{Extension to non-homogeneous processes by use of Girsanov transform}

\me We are interested in 
generalized  one-dimensional stochastic differential equations of the form
\be
\label{eq:girsanov} dX_{t} = \sigma(X_{t}) dB_{t}  + b(X_t)dt + q(X_{t}, \theta_{t}) dt, X_{0}>0, \ee
where $(B_{t}, t\ge 0)$ is a Brownian motion for some  filtration $({\cal F}_{t})_{t}$ and $(\theta_{t}, t\geq 0)$ is predictable with respect to $({\cal F}_{t})_{t}$. The process $(\theta_{t})_{t}$ can for example model an environmental heterogeneity. Other examples will be given in Section \ref{WPpop}.

\bi Assumption $(H)$:  {\it We consider real functions $\sigma$ and $b$ such that for any Brownian motion $W$ on some probability space, the  one-dimensional stochastic differential equation $dZ_{t} = \sigma(Z_{t}) dW_{t}  + b(Z_t)dt, Z_{0}>0$ satisfies the assumptions of Corollary~\ref{cor:to}. 
}

\bi \begin{theorem}
\label{girsanov}  
Let us consider a solution $X$ of  \eqref{eq:girsanov} where $\sigma$ and $b$ satisfy Assumption $(H)$. We also assume that $T_{0}=T_{0}^X <+\infty$ almost surely and that the  sequence $(T_{k}^X)_{k\in \mathbb{N^*}}$ tends almost surely to infinity as $k$ tends to infinity. 

\me 
Next, we assume that for any $k\in \mathbb{N}^*$,
\begin{align}
\label{cond-girsanov}
\mathbb{E}\Big(\exp\Big(\frac{1}{2} \int_{0}^{T_{k}^X} \frac{q^2(X_{s},\theta_{s})}{\sigma^2(X_{s})}\,ds\Big)\Big) <+\infty.
\end{align}

\me Let $f$ be a non negative locally bounded measurable function on $(0,+\infty)$. We have
\begin{align*}
\int_{0^+} f(y)s(y)\,m(dy) = + \infty\quad
&\Longleftrightarrow\quad \int_{0}^{T_{0}^X} f(X_{s}) ds = + \infty \quad \hbox{almost surely},\\
\int_{0^+} f(y)s(y)\,m(dy) < + \infty\quad
&\Longleftrightarrow\quad \int_{0}^{T_{0}^X} f(X_{s}) ds < + \infty \quad \hbox{almost surely},
\end{align*}
where $s$ and $m$ are defined in~\eqref{scalespeed}.
\end{theorem}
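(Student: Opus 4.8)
The plan is to remove the non-homogeneous drift term $q(X_t,\theta_t)\,dt$ by a Girsanov change of measure, thereby reducing to the homogeneous diffusion covered by Corollary~\ref{cor:to}, and then to transfer the resulting zero--one statements back to the original probability $\P$. Since Girsanov's theorem only produces a genuine (rather than merely local) martingale once an integrability condition is met, I would localise at the hitting times $T_k^X$ and exploit precisely the hypothesis~\eqref{cond-girsanov}, sending $k\to\infty$ only at the very end.

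First I would fix $k\in\N$ and introduce the exponential local martingale
\[
\cE_t=\exp\left(-\int_0^t \frac{q(X_s,\theta_s)}{\sigma(X_s)}\,dB_s-\frac12\int_0^t \frac{q^2(X_s,\theta_s)}{\sigma^2(X_s)}\,ds\right).
\]
By the Novikov-type bound~\eqref{cond-girsanov}, the stopped process $(\cE_{t\wedge T_k^X})_{t\ge0}$ is a uniformly integrable true martingale with $\E(\cE_{T_k^X})=1$, so $d\Q_k=\cE_{T_k^X}\,d\P$ defines a probability measure. As $\cE_{T_k^X}>0$, the measures $\Q_k$ and $\P$ are equivalent on $\cF_{T_k^X}$, i.e. an $\cF_{T_k^X}$-measurable event is $\P$-negligible if and only if it is $\Q_k$-negligible. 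Under $\Q_k$ the process $W_t=B_t+\int_0^t \frac{q(X_s,\theta_s)}{\sigma(X_s)}\,ds$ is, up to time $T_k^X$, a Brownian motion, and $X$ solves $dX_t=\sigma(X_t)\,dW_t+b(X_t)\,dt$ on $[0,T_k^X]$. By Assumption~$(H)$ and uniqueness in law for this homogeneous equation, the stopped path $(X_{t\wedge T_k^X})$ has, under $\Q_k$, the law of the diffusion $Z$ of Corollary~\ref{cor:to} stopped at $T_k^Z$; in particular $Z$ has scale function $s$ and speed measure $m$ given by~\eqref{scalespeed}.

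Next I would apply Corollary~\ref{cor:to} to $Z$ under $\Q_k$: the integral $\int_0^{T_0^Z}f(Z_s)\,ds$ is $\Q_k$-a.s. finite (resp. infinite) according as $\int_{0^+}f(y)s(y)\,m(dy)<\infty$ (resp. $=\infty$), the two formulations agreeing since $m(dy)=2\,dy/(s'(y)\sigma^2(y))$. On the event $\{T_0^X<T_k^X\}$ the stopped path determines $\int_0^{T_0^X}f(X_s)\,ds=\int_0^{T_0^Z}f(Z_s)\,ds$, so both this event and the finiteness of the integral are $\cF_{T_k^X}$-measurable. Transferring through the equivalence $\Q_k\sim\P$: if $\int_{0^+}f\,s\,m<\infty$ then $\P(\{\int_0^{T_0^X}f(X_s)\,ds=\infty\}\cap\{T_0^X<T_k^X\})=0$, while if $\int_{0^+}f\,s\,m=\infty$ then $\P(\{\int_0^{T_0^X}f(X_s)\,ds<\infty\}\cap\{T_0^X<T_k^X\})=0$, for every $k$.

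Finally I would let $k\to\infty$. The hypotheses $T_0^X<\infty$ and $T_k^X\to\infty$ a.s. give $\{T_0^X<T_k^X\}\uparrow\Omega$ up to a $\P$-null set, so the two families of null events above exhaust $\{\int_0^{T_0^X}f(X_s)\,ds=\infty\}$ and $\{\int_0^{T_0^X}f(X_s)\,ds<\infty\}$ respectively, which yields both equivalences of the theorem. The main obstacle is the careful bookkeeping around the change of measure: one must check that~\eqref{cond-girsanov} genuinely promotes the stopped exponential to a true martingale, that the Girsanov drift removal is valid only up to $T_k^X$ (hence the need to localise first and send $k\to\infty$ only afterwards), and that the relevant integrability events are truly $\cF_{T_k^X}$-measurable so that equivalence on $\cF_{T_k^X}$ may legitimately be invoked. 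The remaining steps are routine given Corollary~\ref{cor:to}.
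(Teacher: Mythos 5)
Your proof is correct and takes essentially the same route as the paper's: localisation at $T_k^X$, Novikov plus Girsanov using~\eqref{cond-girsanov} to remove the drift $q$ up to $T_k^X$, the homogeneous integrability criterion applied on the event $\{T_0^X<T_k^X\}$, transfer back to $\P$, and the limit $k\to\infty$ using $T_0^X<\infty$ and $T_k^X\to\infty$. The only cosmetic difference is that you transfer the conclusion via equivalence of $\Q_k$ and $\P$ on $\cF_{T_k^X}$ (null sets preserved), whereas the paper explicitly computes $\P_x\big(\int_0^{\tau_k}f(X_t)\,dt=+\infty\big)$ with the inverse density and invokes Theorem~\ref{Integ-Ext-bounded} rather than Corollary~\ref{cor:to}.
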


\me Note that~\eqref{cond-girsanov} holds true as soon as, for all $k\in\R_+$, 
\be
\label{cond:girsanov}\sup_{x\in (0,k),\theta}|q(x,\theta)/\sigma(x)|<+\infty.\ee


\me \begin{proof}
We use the Girsanov Theorem, as stated for example in Revuz-Yor \cite{Revuz} Chapter 8 Proposition 1.3.


\me Let us consider the diffusion process $X^k$ on $[0,k]$, absorbed when it reaches $0$ or $k$, at time $\tau_k:=T_0^X\wedge T_k^X$. 
\me The exponential martingale $\,{\cal E}(L^k)_{t}$, where
$\,L^k_{t} = - \int_{0}^{t\wedge \tau_{k}} \frac{q(X_{s},\theta_{s})}{\sigma(X_{s})}\,\,dB_{s}$,  is uniformly integrable thanks to \eqref{cond-girsanov} and Novikov's criterion.
Define for any $x>0$ the probability $\mathbb{Q}_{x}$ with  $\,\frac{d\mathbb{Q}_{x}}{d\mathbb{P}_{x} }|_{{\cal F}_{t}}= {\cal E}(L)_{t}$. Then, the process $\omega = B - \langle B, L\rangle$ is a $\mathbb{Q}_{x}$-Brownian motion and, under $\mathbb{Q}_x$, $X$ is solution to the SDE
\begin{align*}
dX_t=\sigma(X_t)d\omega_t+b(X_t)dt.
\end{align*}
Hence $s$ restricted to $(0,k)$ is the scale function of $X^k$ under $\mathbb{Q}_x$. Since $s$ and $f$ are both bounded in a vicinity of $k$, we deduce from Theorem~\ref{Integ-Ext-bounded} that
\begin{align*}
\int_0^{\tau_k} f(X_t)dt<+\infty\text{ a.s., under }\mathbb{Q}_x(\cdot\mid T_k^X< T_0^X).
\end{align*}

\me Note also that, since we assumed that $T_k$ tends almost surely to infinity, we have up to a $\mathbb{P}_x$-negligible event,
\begin{align*}
\left\{\int_0^{T_0} f(X_t)\,dt=+\infty\right\}=\bigcup_{k=0}^{+\infty} \left\{\int_0^{\tau_k} f(X_t)\,dt=+\infty\right\}
\end{align*}
and hence
\begin{align*}
\mathbb{P}_x\Big(\int_0^{T_0} f(X_t)\,dt=+\infty\Big)=\lim_{k\rightarrow+\infty}\mathbb{P}_x\Big(\int_0^{\tau_k} f(X_t)\,dt=+\infty\Big).
\end{align*}
 But, by definition of $\mathbb{Q}_x$ and by Theorem~\ref{Integ-Ext-bounded}, we have
 \begin{align}\label{raisonnement}
\mathbb{P}_{x} \Big( \int_{0}^{\tau_k} f(X_{t}) dt =+\infty\Big) 
&=  \mathbb{E}^{\mathbb{Q}_{x}} \Big(\un_{\int_{0}^{\tau_{k}} f(X_{t}) dt = +\infty} \,  {\cal E}\Big(\int_{0}^{\tau_{k}}  \frac{q(\omega_{s},\theta_{s})}{\sigma(\omega_{s})}\,d\omega_{s}  \Big) \Big)\\
&=
\begin{cases}
0\text{ if }\int_{0+} s(y)f(y)\,m(dy)<+\infty\\
\mathbb{E}^{\mathbb{Q}_{x}} \Big(\un_{T_0<T_k} \,  {\cal E}\left(\int_{0}^{\tau_{k}}  \frac{q(\omega_{s},\theta_{s})}{\sigma(\omega_{s})}\,d\omega_{s}  \right) \Big)\text{ otherwise}
\end{cases}
\\
&=\begin{cases}
0\text{ if }\int_{0+} s(y)f(y)\,m(dy)<+\infty\\
\mathbb{P}_{x}(T_0<T_k)\text{ otherwise.}
\end{cases}
\end{align}
Letting $k$ tend to infinity concludes the proof.
\end{proof}

\section{Applications to population genetics}
\label{camille}

\subsection{Wright Fisher equation with variable population size}
\label{WPpop}

We are interested in the allelic fixation in a population with    variable size that goes almost surely to extinction. The main question is whether  one allele has time or not to get fixed before the population goes extinct.  We will see that it depends on the behavior of the diffusion coefficient (near extinction) in the equation satisfied by the population size.

\subsubsection{Probability of fixation before extinction - Neutral case}

\bigskip
Consider the process $(N_t,X_t)_{t\geq 0}$ solution to the system of stochastic differential equations
\begin{align}
\begin{cases}
\label{eq:syst-N-X} 
dN_t&=\sigma(N_t)\,dB_t,\ N_0>0,\\
dX_t&=\sqrt{\frac{X_t(1-X_t)}{f(N_t)}}\,dW_t,
\end{cases}
\end{align}
where $B,W$ are independent  one-dimensional Brownian motions and $\sigma,f:(0,+\infty)\rightarrow(0,+\infty)$ are locally H\"older functions. 
The system is well defined for all time  $t<T_0^N=\inf\{t\geq 0,\ N_{t-}=0\}$, which is called the extinction time of the system. We set $N_t=0$ and $X_t=\d$ for all $t\geq T_0^N$, where $\d\notin [0,1]$ is a cemetery point.
The stochastic process 
$(N_t,t\geq0)$ models the population size dynamics, while $(X_t,t\geq0)$
represents the dynamics of the proportion of a given allele, or type, in the population. We also denote by $T_F=\inf\{t\geq 0,\ X_t=0\text{ or }1\} = T_0^X\wedge T_1^X$. We say that fixation occurs before extinction if and only if $T_F<T_0^N$ (otherwise we have $T_F=+\infty$). The following result provides a necessary and sufficient criterion ensuring this event to happen with probability one.

\begin{theorem}
\label{thm:ext-fix}
Fixation occurs before extinction with probability one if and only if
\begin{align}
\label{fix-ext}
\int_{0+}\frac{y}{\sigma^2(y)f(y)}\,dy=+\infty.
\end{align}
\end{theorem}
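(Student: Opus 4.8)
The plan is to decouple the system by a time change that is deterministic given the path of $N$: this turns $X$ into a standard neutral Wright–Fisher diffusion (Example~\ref{cor:wright-fisher}) and reduces everything to a perpetual integral for $N$, to which Corollary~\ref{cor:to} applies. First I would introduce the additive functional
\[
A_t=\int_0^t \frac{ds}{f(N_s)},\qquad 0\le t<T_0^N,
\]
which is the natural clock of $X$: since $\langle X\rangle_t=\int_0^t X_s(1-X_s)/f(N_s)\,ds$, conditionally on the path of $N$ the process $X$ is a neutral Wright–Fisher diffusion run along the (given $N$, deterministic) clock $A$. Undoing the time change, $\tilde X_u:=X_{A^{-1}(u)}$, produces a process whose conditional law given $N$ is exactly that of the standard Wright–Fisher diffusion started at $X_0$; since this conditional law does not depend on $N$, the process $\tilde X$ is a standard Wright–Fisher diffusion independent of $N$, with $X_t=\tilde X_{A_t}$. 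Its fixation time $\tilde T_F:=T_0^{\tilde X}\wedge T_1^{\tilde X}$ is thus finite a.s., independent of $N$, and satisfies $\P(\tilde T_F>a)>0$ for every $a\in(0,\infty)$. The independence $B\perp W$ is precisely what makes the un-time-changed noise genuinely decoupled from $N$.

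Because $A$ is continuous and strictly increasing on $[0,T_0^N)$, rising from $0$ to its terminal value $A_{T_0^N}\in(0,+\infty]$, fixation of $X$ strictly before extinction is exactly the event that this clock passes the level $\tilde T_F$ before time $T_0^N$:
\[
\{T_F<T_0^N\}=\{\tilde T_F<A_{T_0^N}\},\qquad A_{T_0^N}=\int_0^{T_0^N}\frac{ds}{f(N_s)}.
\]
I would then show that fixation occurs before extinction a.s. if and only if $A_{T_0^N}=+\infty$ a.s. Indeed, if $A_{T_0^N}=+\infty$ a.s., then since $\tilde T_F<\infty$ a.s. the clock reaches $\tilde T_F$ before extinction and fixation is certain. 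Conversely, if $A_{T_0^N}<+\infty$ with positive probability, then using the independence of $\tilde T_F$ and $A_{T_0^N}$ together with $\P(\tilde T_F>a)>0$ one gets $\P(\tilde T_F\ge A_{T_0^N})>0$, so fixation fails with positive probability. The two cases are exhaustive thanks to the $0$–$1$ law supplied by Corollary~\ref{cor:to} below.

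Finally I would translate $\{A_{T_0^N}=+\infty\ \text{a.s.}\}$ into the integral condition \eqref{fix-ext}. The process $N$ solves the driftless equation $dN_t=\sigma(N_t)\,dB_t$, hence is on natural scale with $s(x)=x$ and speed measure $m(dy)=2\,dy/\sigma^2(y)$; the standing assumption that the population goes extinct a.s. supplies $s(+\infty)=+\infty$ and $\int_{0+}s(y)\,m(dy)=\int_{0+}2y/\sigma^2(y)\,dy<\infty$, so Corollary~\ref{cor:to} applies to $N$ with the nonnegative function $y\mapsto 1/f(y)$. Its criterion gives
\[
\int_0^{T_0^N}\frac{ds}{f(N_s)}=+\infty\ \text{a.s.}\quad\Longleftrightarrow\quad\int_{0+}\frac{y}{\sigma^2(y)f(y)}\,dy=+\infty,
\]
which, combined with the equivalence of the previous paragraph, proves the theorem.

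The main obstacle is the construction in the first paragraph: making rigorous the representation $X_t=\tilde X_{A_t}$ and, above all, establishing the independence of the Wright–Fisher fixation time $\tilde T_F$ from the random total clock $A_{T_0^N}$. It is this independence, inherited from $B\perp W$ via the observation that the conditional law of $\tilde X$ given $N$ is free of $N$, that drives the delicate converse ("positive probability of non-fixation") direction; the remaining steps are then routine applications of the criteria of Section~2.
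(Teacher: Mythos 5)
Your proposal is correct and follows essentially the same route as the paper: the same time change $\tau=A^{-1}$ turning $X$ into a neutral Wright--Fisher diffusion, the same reduction of the dichotomy to the a.s.\ finiteness or infiniteness of $\int_0^{T_0^N} f(N_s)^{-1}\,ds$ via the perpetual-integral criterion, and the same independence argument for the converse (which the paper makes rigorous exactly as you anticipate, by constructing an auxiliary Wright--Fisher diffusion $\hat X'$ independent of $N$ and invoking uniqueness in law for the system \eqref{eq:syst-N-X}).
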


\begin{remark}
Note that the counterpart of the above result is: $\P(T_0^N<T_F=+\infty)>0$ if and only if $\int_{0+}\frac{y}{\sigma^2(y)f(y)}\,dy<\infty$. 
\end{remark}

\begin{remark}
Note that  $f(0)$ can be null or not. Nevertheless  the case $f(0)=0$  is  more interesting  and biologically motivated (see \cite{Coron}). An example will be studied in Proposition \ref{Prop2dim}.
\end{remark}
\begin{proof}
We define the random number
\begin{align*}
T_{max}=\int_0^{T_0^N} \frac{1}{f(N_s)}\,ds
\end{align*}
and the time change $\tau(t)$, for all $t\in[0,T_{max}]$, as the unique positive real number satisfying
\begin{align}
\label{timechange}
\int_0^{\tau(t)} \frac{1}{f(N_s)}\,ds=t.
\end{align}
In particular, $\tau$ is increasing and $\,T_0^N=\tau(T_{max})$.
Now, we set for $ t< T_{max}$ 
\begin{align*}
\hat{X}_t=X_{\tau(t)}. \end{align*}
 The time change formula implies that $(\hat{X}_{t}, t<T_{max})$ is solution to the stochastic differential equation
\begin{align*}
d\hat{X}_t=\sqrt{\hat{X}_t(1-\hat{X}_t)}\,d\tilde{W}_t,\ \hat{X}_0=X_0,\ t\in[0,T_{max})
\end{align*}
where $\tilde{W}$ is a standard Brownian motion.  We denote by $\hat{T}_F = \inf\{t>0, \hat{X}_t \in \{0,1\}\}$  the (possibly infinite) absorption time of $\hat{X}$. 

\bi
(i) {Assume that $\int_{0+}\frac{y}{\sigma^2(y)f(y)}\,dy=+\infty$.} In this case, $T_{max}=+\infty$ by Theorem~\ref{Integ-Ext} and $\hat{X}$ reaches $0$ or $1$ in finite time almost surely.  In particular, $\tau(\hat{T}_F)<T_0^N$ almost surely and
\begin{align*}
X_{\tau(\hat{T}_F)}=\hat{X}_{\hat{T}_F}=0\text{ or }1.
\end{align*}
As a consequence, $T_F= \tau(\hat{T}_F) <T_0^N$ with probability one, and hence fixation occurs before extinction almost surely.

\bi
(ii) {Assume that $\int_{0+}\frac{y}{\sigma^2(y)f(y)}\,dy<+\infty$.}  In this case $T_{max}<+\infty$ with probability one by Theorem~\ref{Integ-Ext}. 

Let $\tilde{W}'$ be a Brownian motion independent from $B$ and consider $\hat{X}'$ the solution to the SDE $d\hat{X}'_t=\sqrt{\hat{X}'_t(1-\hat{X}'_t)}\,d\tilde{W}'_t,\ \hat{X}'_0=X_0$. We define  for $t<T_0^N$  the time changed $X'_t=\hat{X}'_{\tau^{-1}(t)}$, so that $(N,X')$ is solution to the SDE system~\eqref{eq:syst-N-X} and hence, by uniqueness in law of the solution to this system, $(N,X')$ and $(N,X)$ have the same law. Since $(N,\hat{X}')$ and $(N,\hat{X})$ can be obtained as the same  function of $(N,X')$ and $(N,X)$ respectively, we deduce that they share the same law up to time $T_{max}$. Then we have
\begin{align*} &\P(X_t \in (0,1) \,\, \forall t<T_0^N \hbox{ and } X_{T_0^N-} \hbox{ exists in } (0,1))\nonumber\\
&\quad=\P(\hat X_t \in (0,1) \,\,\forall t<T_{max} \hbox{ and } \hat X_{T_{max-}} \hbox{ exists in } (0,1))\nonumber \\
 &\quad= \P(\hat X'_t \in (0,1) \,\,\forall t< T_{max} \hbox{ and }\hat X'_{T_{max-}} \hbox{ exists in } (0,1))\\
 &\quad >0,
 \end{align*}
 since $N$ and $\hat{X}'$ are independent and $\hat{X}'$ is a Wright-Fisher diffusion.
This concludes the proof, since  $\{X_t \in (0,1), \forall t<T_0^N \hbox{ and } X_{T_0^N-} \hbox{ exists in }(0,1)\} $ $\subset \{T_0^N<T_F\}$, therefore
$\P(T_0^N<T_F)>0$.
\end{proof}

\me Theorem \ref{thm:ext-fix} can be extended in a natural way to the case where $N$ is not on natural scale. We  consider the following classical  modeling of Wright-Fisher diffusion with variable population size, which corresponds to  $f(y)=y, \forall y\in \mathbb{R}_{+}$. 

 \begin{proposition}\label{Prop2dim} Let us consider the two-dimensional stochastic system 
 \begin{align*}
 \begin{cases}
dN_t&= \sigma(N_{t})\,dB_{t} + N_{t}(\beta- c N_{t})dt,\ N_0>0, \,c\geq 0;\,\\
dX_t&=\sqrt{\frac{X_t(1-X_t)}{N_t}}\,dW_t,\ X_0>0,
\end{cases}
\end{align*}
with two independent Brownian motions $B$ and $W$. 
\begin{itemize}
\item[(i)] Fixation occurs before extinction with probability one if and only if
\begin{align*}
\int_{0+}\frac{1}{\sigma^2(y)}\,dy=+\infty.
\end{align*}
\item[(ii)] Under this condition,
\begin{align}
\label{integ-var}
{\mathbb{P}_{x}\Big(\int_{0}^{T_{1}^X\wedge T_0^N}  \frac{1}{N_s(1-X_{s})} \,ds = + \infty \Big)  = 1. }
\end{align}
\end{itemize}
\end{proposition}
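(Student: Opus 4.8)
The plan is to follow the time-change reduction used for Theorem~\ref{thm:ext-fix}, the only new feature being the drift $b(y)=y(\beta-cy)$ carried by $N$, which forces me to read the relevant dichotomy off Corollary~\ref{cor:to} instead of Theorem~\ref{Integ-Ext}. As there, I set $T_{max}=\int_0^{T_0^N}\frac{ds}{N_s}$ and define the time change $\tau$ by \eqref{timechange} with $f(y)=y$; the time-change formula again turns $\hat X_t=X_{\tau(t)}$ into a solution of the standard Wright--Fisher SDE $d\hat X_t=\sqrt{\hat X_t(1-\hat X_t)}\,d\tilde W_t$, a step that does not see the drift of $N$. Exactly as in Theorem~\ref{thm:ext-fix}, fixation before extinction holds almost surely iff $T_{max}=+\infty$ a.s.\ (then $\hat X$ is absorbed in finite time and $T_F=\tau(\hat T_F)<T_0^N$), while if $T_{max}<\infty$ one compares $(N,X)$ with $(N,\hat X')$ for an independent Wright--Fisher diffusion $\hat X'$ and, by uniqueness in law, gets a positive probability that $X$ stays in $(0,1)$ up to $T_0^N$.

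So everything reduces to the finiteness of $T_{max}$ for $N$, whose scale $s$ and speed $m$ are given by \eqref{scalespeed}. Applying Corollary~\ref{cor:to} to $Z=N$ with $f(y)=1/y$ gives
\[
T_{max}=+\infty\ \text{a.s.}\ \Longleftrightarrow\ \int_{0+}\frac{s(y)}{y\,s'(y)\,\sigma^2(y)}\,dy=+\infty,
\]
and the crux is to match this with $\int_{0+}\frac{dy}{\sigma^2(y)}$ by controlling $s(y)/(y\,s'(y))$ near $0$. This is where the linear vanishing of the drift enters (as already noted for the logistic diffusion example). For $\beta>0$ the drift $b(y)=y(\beta-cy)$ is positive for small $y$, so $\log s'$ is decreasing and $s(y)=\int_0^y s'(u)\,du\geq y\,s'(y)$; the resulting lower bound $s(y)/(y\,s'(y))\geq 1$ already yields $\int_{0+}\frac{dy}{\sigma^2}=+\infty\Rightarrow T_{max}=+\infty$, i.e.\ fixation before extinction. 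Conversely, if $\int_{0+}\frac{dy}{\sigma^2}<\infty$ then a fortiori $\int_{0+}\frac{z\,dz}{\sigma^2(z)}<\infty$, so $s'(0^+)$ is finite and positive and $s(y)/(y\,s'(y))\to1$; the two integrals then converge together and $T_{max}<\infty$. Establishing this two-sided asymptotic $s(y)\sim y\,s'(y)$ is the main obstacle, and it is precisely the place where the specialization $f(y)=y$ and the shape of the logistic drift are used.

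For (ii) I would again use the time change. Under the condition of (i) we have $T_F<T_0^N$ a.s., and the substitution $s=\tau(t)$, $ds=N_{\tau(t)}\,dt$ converts $\int_0^{T_F}\frac{ds}{N_s(1-X_s)}$ into $\int_0^{\hat T_F}\frac{dt}{1-\hat X_t}$. On the event of fixation at $1$ this is $\int_0^{\hat T_1}\frac{dt}{1-\hat X_t}$, which is $+\infty$ a.s.\ on $\{\hat T_1<\hat T_0\}$ by Example~\ref{cor:wright-fisher}. On the event of fixation at $0$ one has $T_1^X\wedge T_0^N=T_0^N$ and $1-X_s=1$ for $s\in[T_0^X,T_0^N)$, so the integral in \eqref{integ-var} dominates $\int_{T_0^X}^{T_0^N}\frac{ds}{N_s}=T_{max}-\int_0^{T_0^X}\frac{ds}{N_s}=T_{max}-\hat T_0$, which is $+\infty$ since $T_{max}=+\infty$ under (i) while $\hat T_0<\infty$. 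Combining the two cases gives \eqref{integ-var}.
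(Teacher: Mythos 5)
Your proof follows the paper's own argument essentially step for step: reduce (i) to the finiteness of $T_{max}=\int_0^{T_0^N}N_s^{-1}\,ds$ via the time change of Theorem~\ref{thm:ext-fix}, read the dichotomy off Corollary~\ref{cor:to} with $f(y)=1/y$, and conclude by comparing $s(y)$ with $y\,s'(y)$ near $0$; for (ii) you transport the divergent perpetual integral of Example~\ref{cor:wright-fisher} back through the time change, treating the absorption-at-$0$ and absorption-at-$1$ events separately exactly as that example implicitly does. You are in fact more careful than the paper, which merely asserts $s(y)\sim_{y\to 0} y\,s'(y)$; the only caveat is that your monotonicity bound $s(y)\geq y\,s'(y)$ uses $\beta>0$ (drift nonnegative near $0$), whereas for $\beta\leq 0$ one should rely on the observation already contained in your converse step, namely that $\int_{0+}y\,\sigma^{-2}(y)\,dy<+\infty$ forces $s'(0^+)\in(0,\infty)$ and hence $s(y)\sim y\,s'(y)$, which settles both implications whenever that integral converges.
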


\begin{proof} (i)
The extension of Theorem \ref{thm:ext-fix} to $N$  with general scale   function $s$ is immediate, using Corollary \ref{cor:to}. Condition \eqref{fix-ext} becomes 
\begin{align}
\int_{0+}\frac{s(y)}{s'(y)\,\sigma^2(y)f(y)}\,dy=+\infty.
\end{align}
Using \eqref{scalespeed} we note that for the present case, $s(y) \sim_{y\to 0} y\,s'(y)$, which allows to conclude. 

(ii) Using notations of Theorem \ref{thm:ext-fix}, we get $T_{max}=+\infty$ a.s., $\hat X_t=X_{\tau(t)}$ for all $t>0$, and $(\hat X_t,t\geq0)$ is a neutral Wright-Fisher diffusion process. From Equation \eqref{integ-fix} we have
\begin{align}
\mathbb{P}_{x}\Big(\int_{0}^{T_{1}^{\hat X}}  \frac{1}{1-\hat X_{s}} \,ds = + \infty\Big)  = 1.
\end{align}

\me Noting that $\tau(T_1^{\hat X})=T_1^X$, that $d\tau(t)=N_{\tau(t)}dt$, and $\tau(+\infty)=T_0^N$ we get the result.
\end{proof}

\bi The previous corollary highlights the major effect of the demography on the maintenance of  genetic diversity. The behavior of $\sigma(N)$ near extinction plays a main role. For the usual  demographic term $\sigma(N)=\sqrt{N}$,  we have almost sure  fixation before extinction, but for a small perturbation of this diffusion term, taking for example $\sigma(N)= N^{(1-\varepsilon)/2}$, $\varepsilon>0$,   extinction before fixation occurs with positive probability. 
An example of extinction before fixation is illustrated in  Figure  1 and the effect of $\varepsilon$ on the probability of  extinction before fixation is numerically studied in Figure  2

\begin{figure}
\begin{center}
\includegraphics[width=0.7\textwidth]{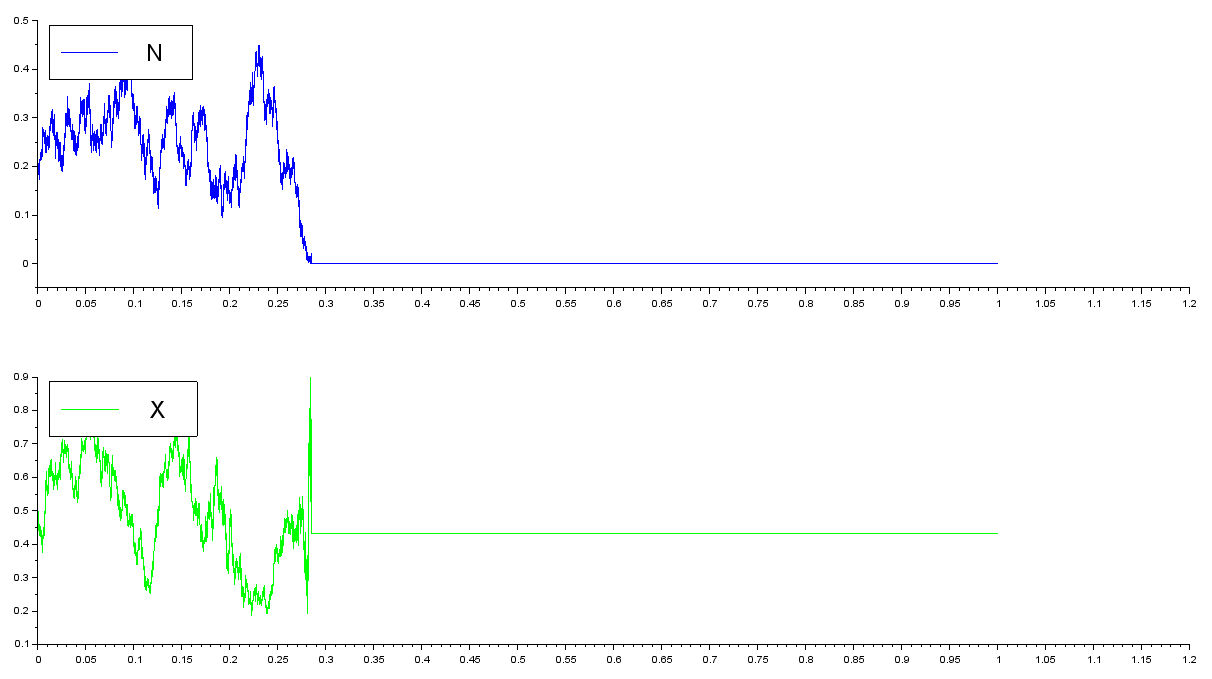}
\caption{\small We plot a trajectory of the $2$-dimensional diffusion process $(N,X)$ such that $dN_t=\sqrt{N_t^{(1-\varepsilon)}}dB^1_t+N_t(r-cN_t)dt$ and $dX_t=\sqrt{\frac{X_t(1-X_t)}{N_t}}$, with $\varepsilon=0.4$, $r=-1$ and $c=0.1$. For this trajectory, fixation does not occur before extinction.}\label{fig:sim-ext}
\end{center}
\end{figure}

\begin{figure}[h]
\label{fig:role-epsilon}
\centering
\includegraphics[width=0.7\textwidth]{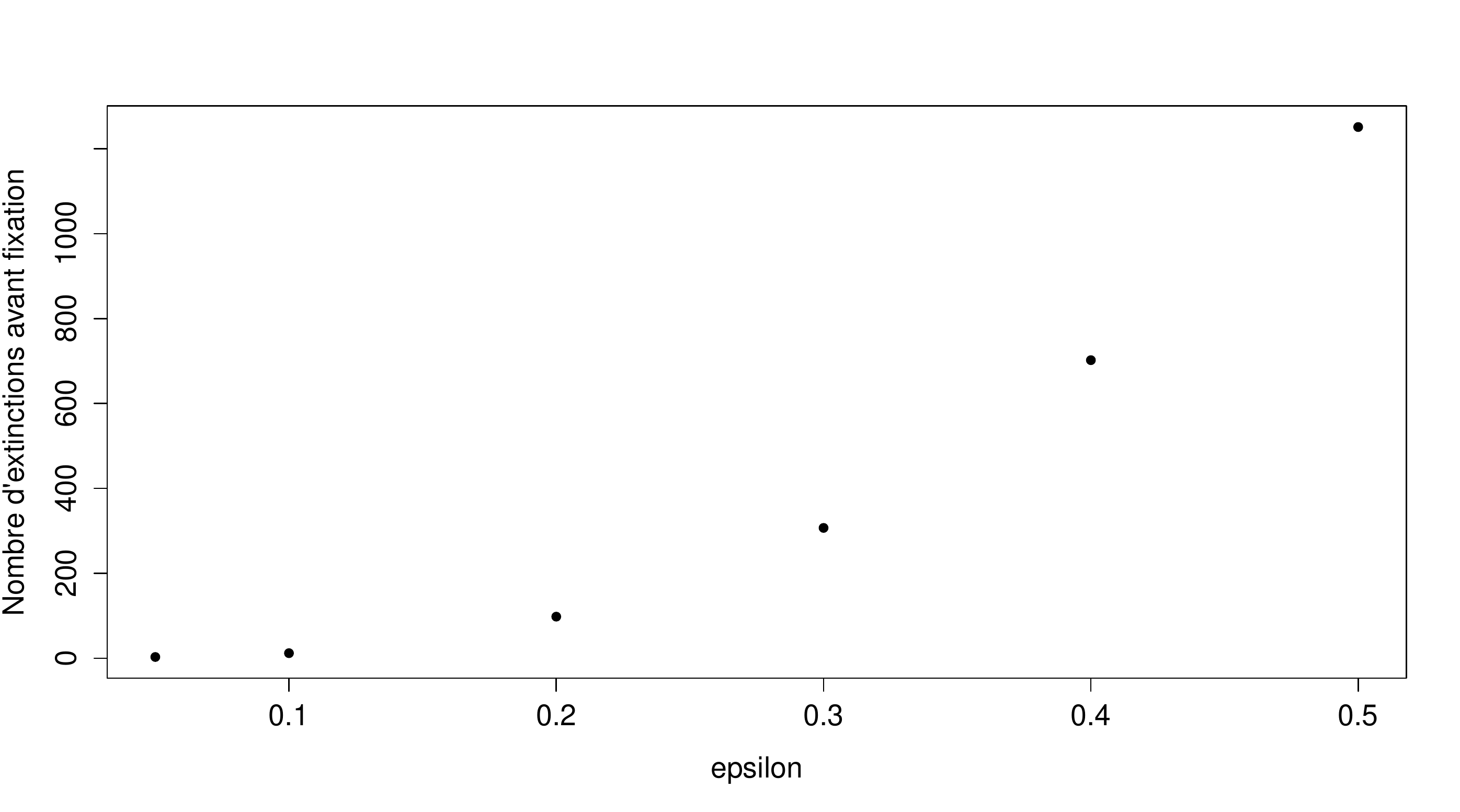}
\caption{\small For different values of $\varepsilon$, we simulate $10000$ trajectories of the $2$-dimensional diffusion process $(N,X)$ such that $dN_t=\sqrt{N_t^{(1-\varepsilon)}}dB^1_t+N_t(r-cN_t)dt$ and $dX_t=\sqrt{\frac{X_t(1-X_t)}{N_t}}$, with $r=-1$ and $c=0.1$. We plot the number of simulations for which fixation does not occur before extinction. This number is increasing with $\varepsilon$.}
\end{figure}

\subsubsection{Wright-Fisher equation with selection and variable population size}

Let us consider a $2$-types competitive Lotka-Volterra stochastic system as introduced in \cite{CattiauxMeleard}:

\ba
\begin{cases}
dN_1(t)&=\sqrt{N_1(t)}dW^1(t)+N_1(t)(r_1-c(N_1(t)+N_2(t)))dt\\
dN_2(t)&=\sqrt{N_2(t)}dW^2(t)+N_2(t)(r_2-c(N_1(t)+N_2(t)))dt,
\end{cases}
\ea
where $(B^1,B^2)$ is a standard Brownian motion, $c>0$. Setting $N(t)=N_1(t)+N_2(t)$ and $X(t)=N_1(t)/N(t)$ leads to
\ba
\begin{cases}
dN_{t}& = \sqrt{N_{t}}\,dB^1_{t} + N_{t}(r_1- c N_{t})dt + (r_2-r_1) N_{t}(1-X_{t})dt;\\
dX_{t}& =\sqrt{\frac{X_{t}(1-X_{t})}{N_{t}}}\,dB^2_{t} - (r_2-r_1) X_{t}(1-X_{t})dt,
\end{cases}
\ea
with two independent Brownian motions $B^1$ and $B^2$. The parameter $r_2-r_1$ represents the selective advantage of the type of population $2$.
 
\begin{proposition}
\begin{itemize}
\item[(i)] Fixation occurs before extinction almost surely.
\item[(ii)] We get \begin{align*}
\mathbb{P}_{x}\bigg(\int_{0}^{T_{1}^X\wedge T_0^N}  \frac{1}{N_s(1-X_{s})} \,ds = + \infty \bigg)  = 1.
\end{align*}
\end{itemize}

\end{proposition}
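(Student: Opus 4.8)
The plan is to treat this proposition as the selective counterpart of Proposition~\ref{Prop2dim}: the two drift terms carrying the factor $r_2-r_1$ encode the selective advantage, and both should be removed by a Girsanov transform, exactly as in Theorem~\ref{girsanov} and Example~\ref{cor:wright-fisher-selection}, so that the problem collapses onto the neutral results already established. Throughout I would keep the time change $\tau$ of the proof of Theorem~\ref{thm:ext-fix}, here with $f(y)=y$, i.e. $\int_0^{\tau(t)}N_s^{-1}\,ds=t$ and $\hat X_t=X_{\tau(t)}$. I will also use two structural facts about $N$, to be recorded separately: that $N$ does not explode, so $T_k^N\to+\infty$ almost surely, and that the competitive Lotka--Volterra system goes extinct, so $T_0^N<+\infty$ almost surely, the latter being the content of \cite{CattiauxMeleard}.

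For part (i), I would first note that the total size $N$ solves a one-dimensional equation of the form \eqref{eq:girsanov}, namely $dN_t=\sqrt{N_t}\,dB^1_t+N_t(r_1-cN_t)\,dt+q(N_t,\theta_t)\,dt$ with environment $\theta_t=X_t$ and $q(y,\theta)=(r_2-r_1)\,y(1-\theta)$, whose base process $dZ=\sqrt Z\,dW+Z(r_1-cZ)\,dt$ is a logistic diffusion of the kind treated in the examples following Corollary~\ref{cor:to} and hence satisfies Assumption~$(H)$. Since $|q(y,\theta)/\sigma(y)|=|r_2-r_1|\sqrt y\,(1-\theta)\le|r_2-r_1|\sqrt k$ on $(0,k)\times(0,1)$, condition \eqref{cond:girsanov} holds and Theorem~\ref{girsanov} applies with $f(y)=1/y$. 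Because $\sigma^2(y)=y$ and $s(y)\sim_{y\to0}y\,s'(y)$, the criterion $\int_{0^+}y^{-1}s(y)\,m(dy)$ reduces to $\int_{0^+}\tfrac{dy}{y}=+\infty$, giving $T_{max}=\int_0^{T_0^N}N_s^{-1}\,ds=+\infty$ almost surely. The time-changed process then solves $d\hat X_t=\sqrt{\hat X_t(1-\hat X_t)}\,d\tilde W_t-(r_2-r_1)N_{\tau(t)}\hat X_t(1-\hat X_t)\,dt$, a Wright--Fisher diffusion carrying the time-dependent selection coefficient $(r_2-r_1)N_{\tau(t)}$; under a Girsanov change of measure removing this drift, $\hat X$ becomes the neutral Wright--Fisher diffusion of Example~\ref{cor:wright-fisher}, which hits $\{0,1\}$ in finite time almost surely, and this event is preserved by equivalence. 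Hence $\hat T_F<\infty=T_{max}$, so $T_F=\tau(\hat T_F)<T_0^N$, i.e. fixation occurs before extinction almost surely.

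For part (ii), the same time change turns the integral into a perpetual integral of $\hat X$: using $ds=N_{\tau(t)}\,dt$ together with $\tau(T_1^{\hat X})=T_1^X$ and $\tau(+\infty)=T_0^N$, one obtains $\int_0^{T_1^X\wedge T_0^N}\tfrac{ds}{N_s(1-X_s)}=\int_0^{T_1^{\hat X}}\tfrac{dt}{1-\hat X_t}$ (with the right-hand side computed over $[0,+\infty)$ when $\hat X$ is absorbed at $0$, where $1-\hat X_t=1$). It therefore suffices to establish the selective analogue of \eqref{integ-fix}, namely $\int_0^{T_1^{\hat X}}\tfrac{dt}{1-\hat X_t}=+\infty$ almost surely, which follows from \eqref{integ-fix} for the neutral Wright--Fisher diffusion after removing the selection drift of $\hat X$ by the same Girsanov transform, precisely as \eqref{integ-fix-sel} is obtained from \eqref{integ-fix} in Example~\ref{cor:wright-fisher-selection}.

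The main obstacle is that each Girsanov density above has a kernel proportional to $\sqrt N$ (respectively $N_{\tau(t)}$ for $\hat X$), which is unbounded along the trajectory, so Novikov's criterion is unavailable on the whole extinction interval. The remedy, which is the technical heart of the argument, is to localize at $\tau_k=T_0^N\wedge T_k^N$: on $[0,\tau_k]$ one has $N\le k$, the kernels are bounded, $\mathcal E(L^k)$ is a uniformly integrable martingale, and under the tilted measure $\mathbb Q_k$ the pair $(N,X)$ solves the neutral system of Proposition~\ref{Prop2dim} up to $\tau_k$. I would then transfer the relevant $\mathcal F_{\tau_k}$-events from $\mathbb Q_k$ to $\mathbb P$ by equivalence of measures and let $k\to+\infty$, exactly along the lines of the proof of Theorem~\ref{girsanov}, writing the global events as increasing unions of $\mathcal F_{\tau_k}$-events and invoking $T_k^N\to+\infty$ and $T_0^N<+\infty$ almost surely to close the limit.
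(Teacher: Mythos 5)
Your proposal is correct and follows essentially the same route as the paper: a two-dimensional Girsanov transform localized at $T_0^N\wedge T_k^N$ (where the kernels are bounded, so $\mathcal{E}(L^k)$ is uniformly integrable) turns $(N,X)$ into the neutral system of Proposition~\ref{Prop2dim}, and the conclusions are transferred back to $\mathbb{P}$ by equivalence of measures and recovered by letting $k\to+\infty$, using $T_k^N\to+\infty$ and the almost sure extinction of $N$. The intermediate sequential reduction you sketch (Theorem~\ref{girsanov} applied to $N$ alone, followed by a separate drift removal for $\hat X$) is subsumed by the joint localized transform that you correctly identify as the technical heart, which is exactly the paper's argument.
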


\begin{proof} We use a $2$-dimensional Girsanov theorem: let us consider the exponential martingale $\mathcal{E}(L^k)_t$, where $$L^k_t=-(r_2-r_1)\left(\int_0^{t\wedge T_k^N}(1-X_s)\sqrt{N_s}dB^1_s-\int_0^{t\wedge T_k^N}\sqrt{N_sX_s(1-X_s)}dB^2_s\right).$$ For each $k$, the martingale $\mathcal{E}(L^k)_t$ is uniformly integrable. Under the probability $\mathbb{Q}$ such that $\frac{d\mathbb{Q}}{d\mathbb{P}}|_{{\cal F}_{t}}=\mathcal{E}(L^k)_t$, the process $$w=(B-\langle B,L\rangle,W-\langle W,L\rangle)$$ is a bi-dimensional Brownian motion, and the process $(N,X)$ is solution to the stochastic differential equation

\begin{align}\label{eqdim2}
\begin{cases}
dN_{t}& = \sqrt{N_{t}}\,dB^1_{t} + N_{t}(r_1- c N_{t})dt\\
dX_{t}& =\sqrt{\frac{X_{t}(1-X_{t})}{N_{t}}}\,dB^2_{t}.
\end{cases}
\end{align}
stopped at $T^N_k$. Let us first prove $(i)$.
\begin{align*}
\lim_{k\to+\infty}\mathbb{P}(T_{0,1}^X<T_0^N)&=\lim_{k\to+\infty}\mathbb{P}(T_{0,1}^X<T_0^N,T_0^N<T_k^N)+\mathbb{P}(T_{0,1}^X<T_0^N,T_0^N>T_k^N)\\
&\leq \lim_{k\to+\infty}\mathbb{P}(T_{0,1}^X<T_0^N,T_0^N<T_k^N)+\mathbb{P}(T_0^N>T_k^N)\\&=\lim_{k\to+\infty}\mathbb{P}(T_{0,1}^X<T_0^N,T_0^N<T_k^N),
\end{align*}
where the last equality comes from the stochastic domination of the stochastic process $N$ by a logistic diffusion, which satisfies \eqref{extinction}. Therefore
\begin{align*} \lim_{k\to+\infty}\mathbb{P}(T_{0,1}^X<T_0^N)&=\lim_{k\to+\infty}\mathbb{E}^{\mathbb{Q}}(\un_{T_{0,1}^X<T_0^N,T_0^N<T_k^N}\,\mathcal{E}(L^k_{T_k^N}))\\&=\lim_{k\to+\infty}\mathbb{E}^{\mathbb{Q}}(\un_{T_0^N<T_k^N}\,\mathcal{E}(L^k_{T_k}))\quad\quad \text{from Proposition \ref{Prop2dim}}\\&
=\lim_{k\to+\infty}\mathbb{P}(T_0^N<T_k^N)
\end{align*}
Now for $(ii)$,
\begin{align*}
\mathbb{P}_{x}&\Big(\int_{0}^{T_{1}^X\wedge T_0^N}  {1\over N_s(1-X_{s})} \,ds = + \infty \Big) \\&=\lim_{k\to+\infty}\mathbb{P}_{x}\Big(\{\int_{0}^{T_{1}^X\wedge T_0^N\wedge T_k}  {1\over N_s(1-X_{s})} \,ds = + \infty\}\cap \{T_1^X\wedge T_0^N<T_k^N\} \Big)\\&= \lim_{k\to+\infty}\mathbb{E}^{\mathbb{Q}_x}\Big(\un_{T^X_1\wedge T^N_0<T^N_k}\un_{\int_{0}^{T_{1}^X\wedge T_0^N\wedge T_k}{1\over N_s(1-X_{s})} \,ds = + \infty}\mathcal{E}(L^k_{T_k})\Big) \\&=\mathbb{P}_x(T^X_1\wedge T^N_0<T^N_k),
\end{align*}
since, from Proposition \ref{Prop2dim}, $\int_{0}^{T_{1}^X\wedge T_0^N\wedge T_k}{1\over N_s(1-X_{s})} \,ds = + \infty\,$ a.s. under $\,\mathbb{Q}_x$.
\end{proof}

\subsection{Successive fixations for the multi-allelic Wright-Fisher case}

We consider now a neutral $L$-type Wright-Fisher diffusion $\,X=(X^1,\cdots, X^L)\,$ with $L$ types (Ethier-Kurtz \cite{EthierKurtz}, pp. $435-439$) describing the dynamics of the respective  proportions of the $L$ alleles. We are interested in the study of the successive extinctions of alleles. 

\me Since $\,X^1+\cdots+ X^L=1$, it is enough to study the dynamics of the process $\,(X^1,\cdots, X^{L-1} )$. We know (see for example  \cite[Chap. 10]{EthierKurtz}) that this diffusion admits the following infinitesimal generator.

\ban
\mathcal{L}_1f(p_1,\cdots,p_{L-1})&=\sum_{i=1}^{L-1} \,p_i(1-p_i)\,\frac{\partial^2f}{\partial p_i^2}(p_1,\cdots,p_{L-1})\\
&\hskip 0.5cm -\sum_{i\neq j\in[\![1,L-1]\!]} \,p_ip_j\,\frac{\partial^2f}{\partial p_i\partial p_j}(p_1,\cdots,p_{L-1}). \label{WFLtypes}
\ean 
We can represent the diffusion $(X^1_{t},\cdots, X^{L-1}_{t})_{t\geq0}$ in the following way: let us start by distinguishing $2$ types of alleles, the allele $1$ and the others. The stochastic process $(X^1(t))_{t\geq0}$ is a neutral Wright-Fisher diffusion and writes
\ba dX^1(t)&=\sqrt{X^1(t)(1-X^1(t))}\,dB^1_t,\ea where $(B^1_t,t\geq0)$ is a  Brownian motion.
Next, among the set of alleles that are not allele $1$ (this population has size $(1-X^1(t))$ at time $t$),we can again distinguish $2$ types of alleles, the allele $2$ and the others. The proportion of allele $3$ in this new population satisfies:
$$d\left(\frac{X^2(t)}{1-X^1(t)}\right)=\sqrt{\frac{ \frac{X^2(t)}{1-X^1(t)}\left(1-\frac{X^2(t)}{1-X^1(t)}\right)}{(1-X^1(t))}}\,dB^2_t$$ where $(B^2_t,t\geq0)$ is a Brownian motion independent  from $B^1$. Finally, the diffusion process $(X^1_{t},\cdots, X^{L-1}_{t})_{t\geq0}$ satisfies the diffusion equation:
\ban\label{diffmulti} 
d\left(\frac{X^i(t)}{1-X^1(t)-...-X^{i-1}(t)}\right)&=\sqrt{\frac{ X^i(t)(1-X^1-...-X^i(t))}{(1-X^1(t)-...-X^{i-1}(t))^3}}\,dB^i_t\ean
for all $i\in[\![1,L-1]\!]$, where $(B^1_t,...,B^{L-1}_t)_{t\geq0}$ is a $L-1$-dimensional Brownian motion. To check these assertions, it suffices to prove that the diffusion process $(X^1_{t},\cdots, X^{L-1}_{t})_{t\geq0}$ that satisfies Equation \eqref{diffmulti} admits the following quadratic variation terms: for all $i\neq j\in[\![2,L]\!]$,
\ba \begin{cases}d\langle X^i(t),X^i(t)\rangle &= X^i(t)(1-X^i(t))\,dt,\\d\langle X^i(t),X^j(t)\rangle &= - X^i(t)X^j(t)\,dt.\end{cases} \ea These results are easily obtained by using a recursion on $i$.

\bi
Our aim is to prove the following theorem:

\begin{theorem}
\label{longtime}
\begin{description}
\item[$(i)$] One of the $L$ alleles is fixed almost surely in finite time, i.e. the random variable $\,\max_{i\in \{1,\cdots, L\}} X^ i\,$ attains $1$ in finite time almost surely. 
\item[$(ii)$] Till that time, the population experiences successive (and non simultaneous) allele extinctions.
\end{description}
\end{theorem}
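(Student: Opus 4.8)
The plan is to argue by induction on the number of types $L$, exploiting the nested representation~\eqref{diffmulti} together with the time-change technique already used in Proposition~\ref{Prop2dim}. The base case $L=1$ is trivial and $L=2$ is the neutral Wright--Fisher diffusion of Example~\ref{cor:wright-fisher}, for which a single allele is fixed in finite time and statement $(ii)$ is vacuous. For the inductive step the key structural input is the following: if one singles out any coordinate $X^c$, then by~\eqref{diffmulti} it is itself an autonomous neutral Wright--Fisher diffusion $dX^c=\sqrt{X^c(1-X^c)}\,dB^c$, and the relative proportions $X^i/(1-X^c)$ of the remaining alleles, read along the time change with clock $\theta_c(s)=\int_0^s \frac{du}{1-X^c_u}$ and inverse $\tau_c$, form an $(L-1)$-type neutral Wright--Fisher diffusion $\hat Z^{(c)}$ that is moreover independent of $X^c$. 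I would establish this exactly as in Proposition~\ref{Prop2dim}(ii), by constructing an independent copy of an $(L-1)$-type Wright--Fisher diffusion, reconstructing the joint dynamics through the inverse time change, and identifying the two laws by uniqueness in law for the martingale problem associated with~\eqref{diffmulti}.

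For part $(i)$ I would argue directly, taking $c=1$. Since $X^1$ reaches $\{0,1\}$ in finite time a.s.\ by Example~\ref{cor:wright-fisher}, either $X^1$ hits $1$ first, in which case allele $1$ is fixed and we are done, or $X^1$ hits $0$ at the finite time $T_0^{X^1}$; on this event $X^1\equiv 0$ afterwards and, by the strong Markov property, $(X^2,\dots,X^L)$ becomes an $(L-1)$-type neutral Wright--Fisher diffusion, which fixes one allele in finite additional time by the induction hypothesis. Either way $T_{\mathrm{fix}}:=\inf\{t:\max_i X^i_t=1\}<\infty$ almost surely, and since the $X^i$ are non-negative and sum to one, exactly one allele equals $1$ at $T_{\mathrm{fix}}$; denote it by $W$.

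For part $(ii)$, the heart of the matter is to show that no two alleles become extinct simultaneously, and I would isolate the last extinction by means of the integral criterion. Working on each event $\{W=w\}$ separately and summing over $w$, I apply Example~\ref{cor:wright-fisher} to the marginal $X^w$: on $\{W=w\}$ we have $T_{\mathrm{fix}}=T_1^{X^w}<\infty$, and the divergence $\int_0^{T_{\mathrm{fix}}}\frac{ds}{1-X^w_s}=+\infty$ forces the internal clock $\theta_w(s)$ of the complementary sub-population to explode as $s\uparrow T_{\mathrm{fix}}$. Consequently the time-changed sub-population $\hat Z^{(w)}$ of the $L-1$ losing alleles runs for infinite internal time, so by the induction hypothesis it fixes one allele $W'$ at a finite internal time $\hat\sigma$ with $L-2$ pairwise distinct internal extinction times. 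Pulling these back through the strictly increasing, continuous map $\tau_w$ gives $L-2$ distinct real extinction times, all strictly below $T_{\mathrm{fix}}$ (because $\tau_w(\hat\sigma)<\lim_{t\to\infty}\tau_w(t)=T_{\mathrm{fix}}$), while the surviving sub-population allele $W'$ becomes extinct exactly at $T_{\mathrm{fix}}$. Hence the $L-1$ extinctions occur at distinct times, and they are successive rather than simultaneous.

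I expect the main obstacle to be the structural lemma of the first paragraph: one must verify not only that the time-changed complement $\hat Z^{(w)}$ is a genuine $(L-1)$-type neutral Wright--Fisher diffusion, but crucially that it is independent of $X^w$, so that conditioning on $\{W=w\}$ (an event measurable with respect to $X^w$) leaves its law unchanged and the induction hypothesis remains applicable under this conditioning. The second, more conceptual point is that the role of Example~\ref{cor:wright-fisher} is exactly to guarantee $\int_0^{T_{\mathrm{fix}}}\frac{ds}{1-X^w_s}=+\infty$: this divergence is what makes the penultimate extinction occur \emph{strictly} before the ultimate one, separating the last two extinction events which could otherwise \textit{a priori} coincide. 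Once these two ingredients are in place, the induction closes, yielding both the almost sure finite-time fixation and the successive, non-simultaneous character of the $L-1$ extinctions.
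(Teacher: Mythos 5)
Your proposal is correct and follows essentially the same route as the paper: induction on $L$, part $(i)$ via the autonomous one-dimensional Wright--Fisher marginal and the strong Markov property, and part $(ii)$ via the time change with clock $\int_0^s \frac{du}{1-X^w_u}$, whose explosion (Example~\ref{cor:wright-fisher}) lets the $(L-1)$-type sub-population of losing alleles run for infinite internal time and undergo $L-2$ distinct extinctions that pull back to distinct times strictly before $T_{\mathrm{fix}}$. The only divergence is technical: the paper establishes the structural lemma (Lemma~\ref{LemmeWF}) by a direct computation of the generator of the time-changed relative proportions, whereas you propose the coupling/uniqueness-in-law argument of Proposition~\ref{Prop2dim}; both work, and the independence you insist on is not strictly needed since the induction hypothesis is an almost-sure statement and therefore survives conditioning on $\{W=w\}$ anyway.
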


\me The proof of this theorem relies on the following lemma
\begin{lemme}\label{LemmeWF} Let $(X^1(t),...,X^{L-1}(t))_{t\geq0}$ be a $L-1$-dimensional Wright-Fisher diffusion process, let $1-X^L(t)=X^1(t)+...+X^{L-1}(t)$ for all time $t\geq0$, and define the change of time $\tau$ on $[0,+\infty)$ (from Example \ref{cor:wright-fisher}) such that $\int_0^{\tau(t)}\frac{1}{1-X_L(s)}ds=t$ for all $t\geq0$.
Now let $$(Y^1_t,Y^2_t,...,Y^{L-2}_t)_{t\geq0}=\left(\frac{X^1}{1-X^L}(\tau(t)),...,\frac{X^{L-2}}{1-X^L}(\tau(t))\right)_{t\geq0}.$$ The stochastic process $(Y^1_t,Y^2_t,...,Y^{L-2}_t)_{t\geq0}$ is a $L-2$-dimensional Wright-Fisher diffusion process.
\end{lemme}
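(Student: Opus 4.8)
The plan is to show that the pre-time-change ratios $U^i:=X^i/(1-X^L)$, $i\in[\![1,L-2]\!]$, form a continuous local martingale whose quadratic covariation is, in the original clock, exactly $(1-X^L)^{-1}$ times the canonical Wright--Fisher covariation; the time change by the factor $1-X^L$ then restores the Wright--Fisher bracket, and well-posedness of the Wright--Fisher martingale problem identifies the law of $Y$. Concretely, I would first set $S:=1-X^L=\sum_{k=1}^{L-1}X^k$ and apply Itô's formula to the smooth map $(x^1,\dots,x^{L-1})\mapsto x^i/\bigl(\sum_{k=1}^{L-1}x^k\bigr)$, using that each $X^k$ is a local martingale (the neutral generator \eqref{WFLtypes} has no first-order part) together with the covariation $d\langle X^k,X^l\rangle_t=(X^k_t\delta_{kl}-X^k_tX^l_t)\,dt$ recalled just before the lemma.

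The key algebraic step is the drift. The finite-variation part of $dU^i$ produced by Itô's formula is $\tfrac12\sum_{k,l=1}^{L-1}(X^k\delta_{kl}-X^kX^l)\,\partial^2_{kl}U^i\,dt$ (a constant multiple of $\mathcal{L}_1U^i$); using $\partial^2_{kl}(x^i/s)=-(\delta_{ik}+\delta_{il})/s^2+2x^i/s^3$ and summing against $(x^k\delta_{kl}-x^kx^l)$, the two resulting contributions are $\mp\,x^i(1-s)/s^2$ and cancel, so this term vanishes identically and each $U^i$ is a local martingale. The same data give, after collecting the martingale part and substituting $X^i=SU^i$, $X^L=1-S$, the covariation $d\langle U^i,U^j\rangle_t=\frac{1}{1-X^L_t}\bigl(U^i_t\delta_{ij}-U^i_tU^j_t\bigr)\,dt$, that is, precisely $(1-X^L)^{-1}$ times the Wright--Fisher structure in the variables $U$. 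I expect this expansion to be the most laborious part, but it is routine.

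Finally I would pass to the time change $\tau$, which satisfies $\tau'(t)=1-X^L(\tau(t))$ by differentiating its defining relation. Since $\tau$ is a continuous increasing family of stopping times, $Y^i_t=U^i(\tau(t))$ is again a continuous local martingale and $\langle Y^i,Y^j\rangle_t=\langle U^i,U^j\rangle_{\tau(t)}$; the change of variable $s=\tau(u)$, $ds=(1-X^L(\tau(u)))\,du$, cancels the factor $1-X^L$ and yields $d\langle Y^i,Y^j\rangle_t=(Y^i_t\delta_{ij}-Y^i_tY^j_t)\,dt$, the canonical covariation of an $(L-2)$-dimensional Wright--Fisher diffusion. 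Combining the vanishing drift with this bracket, Itô's formula shows that $Y$ solves the Wright--Fisher martingale problem, whose well-posedness (Ethier--Kurtz \cite{EthierKurtz}) identifies $Y$ as the claimed diffusion. The genuine subtlety, rather than the algebra, is to justify that $\tau$ is well-defined on all of $[0,+\infty)$ and does not reach the fixation of allele $L$ in finite new time: this is exactly the divergence $\int_0^{T_1^{X^L}}\frac{1}{1-X^L_s}\,ds=+\infty$ of Example~\ref{cor:wright-fisher} applied to the lumped one-dimensional Wright--Fisher process $X^L$, which is also what guarantees that $Y$ is not prematurely absorbed.
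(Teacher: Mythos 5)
Your proposal is correct and follows essentially the same route as the paper's proof: the paper carries out the identical computation at the level of the infinitesimal generator (via the change of variables $g(x_1,\dots,x_{L-1})=(x_1/(1-x_L),\dots,x_{L-2}/(1-x_L),x_1+\dots+x_{L-1})$), obtains the factor $1/(1-x_L)$ in front of the Wright--Fisher second-order terms exactly as you obtain it in the bracket $d\langle U^i,U^j\rangle$, and then removes it with $d\tau(t)=(1-X^L(\tau(t)))\,dt$. The only differences are presentational: you make explicit the vanishing of the drift, the identification of the law via well-posedness of the Wright--Fisher martingale problem, and the fact that $\tau$ is defined on all of $[0,+\infty)$ by the divergence of the perpetual integral --- points the paper leaves implicit or delegates to the statement of the lemma and Example~\ref{cor:wright-fisher}.
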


\begin{proof}[Proof of Lemma \ref{LemmeWF}]
Let us denote by $\tilde{\mathcal{L}}$ the infinitesimal generator of the $L-1$-dimensional diffusion process $(\frac{X^1}{1-X^L}(t),\frac{X^2}{1-X^L}(t),...,\frac{X^{L-2}}{1-X^L}(t),1-X^L(t))_{t\geq0}$. From Equation \eqref{WFLtypes}, the infinitesimal generator $\mathcal{L}_1$ of the $L-1$-dimensional Wright-Fisher diffusion process $(X^1(t),X^2(t),...,X^{L-1}(t))_{t\geq0}$ satisfies for any bounded real-valued twice differentiable function $h$ on $\{(p_1,p_2,...,p_{L-1})|0\leq p_i\leq1 \,\forall i, p_1+p_2+...+p_{L-1}\leq1\}$:

\ba \mathcal{L}_1h(p_1,p_2,...,p_{L-1})&= \sum_{i=1}^{L-1}\gamma \,p_i(1-p_i)\frac{\partial^2h}{\partial p_i^2}(p_1,...,p_{L-1})\\
&\hskip 0.5cm -\sum_{i\neq j\in[\![1,L-1]\!]}\gamma \,p_ip_j\,\frac{\partial^2h}{\partial p_i\partial p_j}(p_1,...,p_{L-1}).\ea

\me Now for any bounded real-valued twice differentiable function $f$ defined on $\{(\tilde{x}_1,...,\tilde{x}_{L-2},1-x_L)|0\leq \tilde{x}_i\leq1 \forall i, \tilde{x}_1+...+\tilde{x}_{L-2}\leq1\}\times[0,1]$, we may write $$\tilde{\mathcal{L}}f(\tilde{x}_1,...,\tilde{x}_{L-2},1-x_L)=\mathcal{L}(f\circ g)(x_1,...,x_{L-1}),$$ where \ba g(x_1,...,x_{L-1})&=(g_1(x_1,...,x_{L-1}),...,g_{L-1}(x_1,...,x_{L-1}))\\&=\left(\frac{x_1}{1-x_L},...,\frac{x_{L-2}}{1-x_L}, x_1+...+x_{L-1}\right),\ea and $(\tilde{x}_1,...,\tilde{x}_{L-2},1-x_L)=g(x_1,...,x_{L-1})$.

\me Therefore, we obtain that 

\ba\tilde{\mathcal{L}}f(\tilde{x}_1,\tilde{x}_2,...,\tilde{x}_{L-2},1-x_L)&=\sum_{j=1}^{L-2}\frac{\gamma\tilde{x}_j(1-\tilde{x}_j)}{1-x_L}\frac{\partial^2f}{\partial\tilde{x}_j^2}(\tilde{x}_1,\tilde{x}_2,...,\tilde{x}_{L-2},1-x_L)\\&-\sum_{j\neq k\in[\![1,L-2]\!]}\frac{\gamma\tilde{x}_j\tilde{x}_k}{1-x_L}\frac{\partial^2f}{\partial\tilde{x}_j\partial\tilde{x}_k}(\tilde{x}_1,\tilde{x}_2,...,\tilde{x}_{L-2},1-x_L)\\&+\gamma x_L(1-x_L)\frac{\partial^2f}{\partial(1-x_L)^2}(\tilde{x}_1,\tilde{x}_2,...,\tilde{x}_{L-2},1-x_L)\ea
which gives the result since $d\tau(t)=(1-X^L(t))dt$.
\end{proof}

\begin{proof}[Proof of Theorem \ref{longtime}]
We prove both results by induction on $L$. For $(i)$, we know that the result is true for $L=2$. Now for $L$ alleles, note that the proportion of allele $1$ follows a $1$-dimensional Wright-Fisher diffusion. Therefore allele $1$ gets fixed or disappears almost surely in finite time. If allele $1$ gets fixed then one of the $L$ alleles gets fixed almost surely in finite time. If allele $1$ gets lost then from its extinction time, the population follows a $L-1$-type  Wright-Fisher diffusion, therefore one of the $L-1$ remaining alleles gets fixed almost surely in finite time, using the induction assumption.

\me We now prove $(ii)$. We have
\begin{align*}
\int_{0}^{T^L_{1}} \frac{1}{1-X^L_{s}} \,ds=+\infty.
\end{align*} from Example \ref{cor:wright-fisher}.
We define the time change $\tau(t)$, for all $t\in [0,+\infty)$, as the unique non-negative real number satisfying
\begin{align*}
\int_{0}^{\tau(t)} \frac{1}{1-X^L_{s}} ds = t.
\end{align*}
Now,  for all $1\leq i\leq L-1$, let us define the stochastic process $Y_t=(Y^1_t,\ldots,Y^{L-1}_t)_{t\geq0}$ such that
\begin{align*}
Y^i_t=\frac{X^i}{1-X^L}(\tau(t)) \quad\forall t\in[0,+\infty).
\end{align*}
From Lemma \ref{LemmeWF}, the $L-1$ stochastic process $(Y^1_t,Y^2_t,...,Y^{L-1}_t)_{t\geq0}$ is a $L-1$ dimensional Wright-Fisher diffusion process. By recurrence assumption, this diffusion process experiences $L-2$ successive and non simultaneous extinctions, at times denoted by $S^Y_1<...<S^Y_{L-2}<+\infty$. Therefore $\tau(S^Y_1)<...<\tau(S^Y_{L-2})<\tau(+\infty)=T_1^L$. Under the event $T^L_1<+\infty$, the times $\tau(S^Y_1)$, ..., $\tau(S^Y_{L-2})$ and $T_1^L$ correspond to the $L-1$ extinction times experienced by the population, which gives the result, since $\mathbb{P}(\cup_{i=1}^L\{T^i_1<+\infty\})=1$ from~ $(i)$.
\end{proof}

{\bf Acknowledgements:} {\sl This work  was partially funded by the Chair "Mod\'elisation Math\'ematique et Biodiversit\'e" of VEOLIA-Ecole Polytechni\-que-MNHN-F.X. It was also supported by a public grants as part of the "Investissement d'avenir" project, reference ANR-11-LABX-0056-LMH,-LabEx LMH, and reference ANR-10-CAMP-0151-02, Fondation Math\'ematiques Ja\-cques Hadamard, and by the Mission for
Interdisciplinarity at the Centre national de la recherche scientifique.}

\end{document}